\documentclass[12pt]{article}
\usepackage{epsfig, color}
\usepackage{graphicx}
\usepackage[cp437de]{inputenc}
\usepackage{amsfonts,amsmath,amsthm}
\usepackage{times}
\usepackage{bm}
\usepackage{natbib}

\usepackage[plain,noend]{algorithm2e}
\evensidemargin1.0cm \oddsidemargin1.0cm \hoffset=-0.5cm
\parindent3mm
\parskip1ex plus0.5ex minus0.3ex
\sloppy
\textwidth14.5cm
\textheight22.cm
\pagestyle{myheadings}
\setlength{\parindent}{1em}

\newcommand{\R}{\mathbb{R}}
\newcommand{\Z}{\mathbb{Z}}
\newcommand{\N}{\mathbb{N}}

\newcommand{\var}{\mbox{Var}}
\newcommand{\Cov}{\mbox{Cov}}
\newcommand{\med}{\mbox{Med}}

\newcommand{\claw}{\longrightarrow}
\newcommand{\A}{{\mathcal A}}
\newcommand{\B}{{\mathcal B}}
\newcommand{\F}{{\mathcal F}}
\newcommand{\G}{{\mathcal G}}
\newcommand{\Hy}{{\mathcal H}}

\newcommand{\pr}{\mbox{pr}}


\newcommand{\cddot}{.}
\newtheorem{theorem}{Theorem}
\newtheorem{corollary}{Corollary}
\newtheorem{definition}{Definition}
\newtheorem{remark}{Remark}
\newtheorem{assumption}{Assumption}
\newtheorem{lemma}{Lemma}
\newtheorem{proposition}{Proposition}

\begin{document}
\begin{center}\Large \bf A robust method for shift detection in time series\end{center}

\begin{center}
H. Dehling$^a$, R. Fried$^b$, M. Wendler$^c$\\
${}^a$ Fakult\"at f\"ur Mathematik, Ruhr-Universit\"at Bo\-chum,
44780 Bochum, Germany, herold.dehling@rub.de\\
${}^b$ Fakult\"at Statistik, TU Dortmund University,
 44221 Dortmund, Germany, {fried@statistik.tu-dortmund.de}\\
${}^c$ Institut f\"ur Mathematik und Informatik, Universit\"at Greifswald\\
17487 Greifswald, Germany, {martin.wendler@uni-greifswald.de}
\end{center}

\begin{quote}
Abstract: We present a robust test for a change-point in time series which is based on the two-sample Hodges--Lehmann estimator.  We develop new limit theory for a class of statistics based on two-sample U-quantile processes, in the case of short range dependent observations. Using this theory we derive the asymptotic distribution of our test statistic under the null hypothesis of a constant level. The proposed test shows better overall performance under normal, heavy-tailed and skewed distributions than several other modifications of the popular cumulative sums test based on U-statistics, one-sample U-quantiles or M-estimation. The new theory does not involve moment conditions, so that any transform of the observed process can be used to test the stability of higher order characteristics such as variability, skewness or curtosis.\\[5mm]
Key words: Change-point tests; Functional central limit theorem; Hodges--Lehmann estimator; Two-sample U-process; Two-sample U-quantiles; Two-sample U-statistics; Weak dependence.
\end{quote}

\newpage

\section{Introduction}

Statistical tests for the presence of a change in the structure of a time series are of great importance, for instance regarding economic, technological and  climate data. Many procedures for detecting changes and for estimating change-points have been proposed in the literature, see \citet{csorgoe1997}. There has been a growing interest in change-point analysis for heavy-tailed time series recently, see \citet{huskova2012}, \citet{praskova2014}, \citet{chakar2017}, \citet{vogel2017} and \citet{fearnhead2018}.

    We study tests for detecting a level shift in a time series $(X_i: i\in\Z)$, assuming that
\[
 X_i=\mu_i + Y_i,
\]
where $(\mu_i:i\in \Z)$ is a sequence of unknown constants and $(Y_i:i\in\Z)$ is a stationary process with mean zero.  We will focus on the case when $(Y_i:i\in\Z)$ is a weakly dependent process, in a sense that we will specify below.
Given observations $X_1,\ldots, X_n$, we want to test the  null hypothesis that the process is stationary, that is
\[
 \Hy_0:\,\mu_1=\ldots=\mu_n,
\]
against the alternative that there is a level shift after some unknown point in time $k^\star$, that is
\[
 \Hy_1:\,   \mu_1=\ldots=\mu_{k^\star} \neq \mu_{k^\star+1}=\ldots=\mu_n,\quad k^\star\in \{1,\ldots,n-1\}.
\]
If the change-point $k^\star$ was known in advance, this would be a standard two-sample problem with samples $X_1,\ldots,X_{k^\star}$ and $X_{k^\star+1},\ldots,X_n$. Tests for the two-sample problem serve as guideline for finding tests for the
more difficult change-point problem studied here where $k^\star$ is unknown.

The standard statistic for the change-point problem is the cumulative sum statistic $C_n$, which can be written as the maximum of
\[
C_{n,k}= \frac{1}{{n}^{1/2}} \left( \sum_{i=1}^k X_i -\frac{k}{n} \sum_{i=1}^n X_i \right)
 ={n}^{1/2} \frac{k}{n} \left(1-\frac{k}{n}\right) \left(\frac{1}{k}\sum_{i=1}^k X_i - \frac{1}{n-k} \sum_{i=k+1}^n
 X_i  \right)
\]
over all candidate split time points $k$. The last term 
on the right hand side is the difference of the sample means, or equivalently the mean pairwise difference between the samples, which is the
standard estimator for a location shift if the two samples $X_1,\ldots,X_k$ and $X_{k+1},\ldots,X_n$ are Gaussian.
 The cumulative sums test uses the test statistic $C_n/\hat{\sigma}_n$, where $\hat{\sigma}_n^2$ is a consistent estimator of the long run variance $\sigma^2=\sum_{j=-\infty}^\infty \Cov(Y_0,Y_j)$ of the mean.
   The asymptotic distribution of this test statistic under $\Hy_0$ can be derived from a functional central limit theorem for the partial sum process $(n^{-1/2} \sum_{i=1}^{\lfloor n\lambda\rfloor} Y_i:\ {0\leq \lambda \leq 1})$.
     Approximate critical values can hence be obtained from tables of the
Kolmogorov--Smirnov distribution, which is the distribution of the
supremum of the Brownian bridge process $\sup_{0\leq \lambda\leq 1}\{|W(\lambda)-\lambda W(1)|:\ {0\leq \lambda \leq 1}\}$.


The cumulative sums test is based on partial sums and not robust to outlying observations.
 We propose a test based on the Hodges--Lehmann two-sample estimator of a location shift, which is the median pairwise difference $\med_{i=1,\ldots,k;j=k+1,\ldots,n} (X_i -X_j)$, instead of the mean difference. The Hodges--Lehmann estimator is robust, and its asymptotic efficiency relatively to the mean difference is larger than $95\%$ in case of Gaussian observations and never below $86\cddot 4\%$ in case of continuous distributions \citep{hodges1963}.  \citet{fried2011} explored the good robustness properties of two-sample tests based on this estimator, and
\citet{dehling2012} proved its asymptotic normality in the case of short range dependent observations.
The change-point test constructed here will be valid without any moment assumptions on the underlying data, and can thus be applied
to arbitrarily heavy-tailed data.

The Hodges--Lehmann change-point statistic proposed here is
\[
 M_n= {n}^{1/2}\max_{1\leq k \leq n} \frac{k}{n} \left(1-\frac{k}{n}\right)
 \left| \med\{(X_j-X_i):\ 1\leq i \leq k< j \leq n   \} \right|.
\]
$\Hy_0$ will be rejected for large positive values of $M_n$.
The asymptotic distribution of $M_n$ can be derived from studying the process
$[{n}^{1/2} \lambda (1-\lambda) \med\{(X_j-X_i):\ 1\leq i \leq \lfloor n\lambda\rfloor, \lfloor n\lambda\rfloor+1\leq j \leq n\},\ 0\leq \lambda \leq 1]$. More generally, we study the quantile process of the values
\[
  g(X_i,X_j), \quad 1\leq i\leq \lfloor n\lambda\rfloor,\;  \lfloor n\lambda\rfloor+1 \leq j \leq n,
\]
indexed by $0\leq \lambda \leq 1$, where $g(x,y)$ is a given function of two variables.
We investigate the asymptotic distribution of this process in the case of short range dependent data, and obtain the following theorem as a special case of the more general Theorem \ref{th:2uq-fclt}.

\begin{theorem}
Let $(Y_i:\ {i\in \Z})$ be a stationary process that is a near epoch dependent functional of an
absolutely regular process $(Z_i:\ {i\in \Z})$ with mixing coefficients $(\beta_j: \ j\in\N)$ and
approximating constants $(a_j:\ j\in \N)$ satisfying $\beta_j=O(j^{-8})$ and $a_j=O(j^{-12})$.
Moreover, let $Y_1$ have an absolutely continuous distribution with density $f(x)$ and assume that $
u(x)=\int f(y) f(x+y) dy$ is $\frac{1}{2}$-H\"older continuous. Then, under $\Hy_0$, we obtain
\[
{n}^{1/2}\max_{1\leq k \leq n} \frac{k}{n} (1-\frac{k}{n})
\left| \med\{(X_j-X_i):1\leq i \leq k< j \leq n   \}\right|
 \claw \frac{\sigma}{u(0)} \sup_{0\leq \lambda \leq 1}
\left|W^{(0)}(\lambda)\right|,
\]
in distribution, where $\{W^{(0)}(\lambda): \ {0\leq \lambda \leq 1}\}$ denotes a standard Brownian bridge process,
\[
\sigma^2=\sum_{i=-\infty}^\infty \Cov\{F(X_0),F(X_i)\},
\]
and $F$ is the marginal distribution function of the $X_i$, $i\in\Z$.
\label{th:HL-process}
\end{theorem}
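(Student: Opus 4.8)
The plan is to reduce the statement to a functional central limit theorem for the two-sample U-process together with a Bahadur-type linearization of the associated U-quantile process. Write $X_i = \mu + Y_i$ under $H$, so that $X_j - X_i = Y_j - Y_i$, and introduce the empirical two-sample U-distribution function
\[
U_{k,n}(t) = \frac{1}{k(n-k)}\sum_{i=1}^{k}\sum_{j=k+1}^{n}\mathbf{1}\{X_j - X_i \le t\},
\]
together with its population counterpart $U(t) = \int F(t+x)\,f(x)\,dx = P(Y' - Y \le t)$, where $Y, Y'$ are independent copies of $Y_1$. The Hodges--Lehmann estimator $\hat m_{k} := \med\{X_j - X_i : 1 \le i \le k,\ k+1 \le j \le n\}$ is exactly the sample $\tfrac12$-quantile of $U_{k,n}$. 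Since $Y' - Y$ is symmetric about $0$, we have $U(0) = \tfrac12$ and the true median equals $0$; moreover $U'(0) = u(0) = \int f(y)^2\,dy > 0$. I would therefore study the centred sequential process $\sqrt n\,\tfrac{k}{n}(1-\tfrac{k}{n})(U_{k,n}(t) - U(t))$ and then transfer the resulting limit theorem to the quantile process $\hat m_k$.

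Second, I would linearize the U-process by its Hoeffding projection. For the kernel $h_t(x,y) = \mathbf{1}\{y - x \le t\}$ the two one-dimensional projections are $h_{1,t}(x) = F(x+t) - U(t)$ and $h_{2,t}(y) = 1 - F(y-t) - U(t)$, and at the relevant point $t = 0$ these collapse to $h_{1,0}(x) = F(x) - \tfrac12$ and $h_{2,0}(y) = \tfrac12 - F(y)$. Consequently the dominant linear part of $\sqrt n\,\tfrac{k(n-k)}{n^2}(U_{k,n}(0) - \tfrac12)$ equals
\[
\frac{1}{\sqrt n}\sum_{i=1}^{k}\Big(F(X_i)-\tfrac12\Big) - \frac{k}{n}\,\frac{1}{\sqrt n}\sum_{i=1}^{n}\Big(F(X_i)-\tfrac12\Big) + o_P(1),
\]
uniformly in $k$. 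Setting $R_n(\lambda) = \frac{1}{\sqrt n}\sum_{i=1}^{[n\lambda]}(F(X_i) - \tfrac12)$, this is $R_n(\lambda) - \lambda R_n(1)$ with $\lambda = k/n$. An invariance principle for the short-range dependent, bounded, mean-zero sequence $(F(X_i) - \tfrac12)_{i\ge1}$ --- available under the stated near epoch dependence and absolute regularity conditions --- yields $R_n \Rightarrow \sigma W$ in $D[0,1]$ with $\sigma^2 = \sum_{k=-\infty}^\infty \Cov(F(X_0), F(X_k))$, whence $R_n(\lambda) - \lambda R_n(1) \Rightarrow \sigma W^{(0)}(\lambda)$, a Brownian bridge.

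Third, I would establish a uniform Bahadur representation for the U-quantile process, namely
\[
\frac{k}{n}\Big(1-\frac{k}{n}\Big)\,\hat m_k = -\frac{1}{u(0)}\,\frac{k(n-k)}{n^2}\Big(U_{k,n}(0) - \tfrac12\Big) + o_P(n^{-1/2})
\]
uniformly in $k$, obtained by inverting $U_{k,n}$ near $t=0$ and using $U'(0) = u(0)$. Combining this with the previous step gives
\[
\sqrt n\,\frac{k}{n}\Big(1-\frac{k}{n}\Big)\hat m_k = -\frac{1}{u(0)}\Big(R_n(k/n) - \tfrac{k}{n}R_n(1)\Big) + o_P(1)
\]
uniformly in $k$, and an application of the continuous mapping theorem to the functional $x \mapsto \sup_{0\le\lambda\le1}|x(\lambda)|$ produces the claimed limit $\frac{\sigma}{u(0)}\sup_{0\le\lambda\le1}|W^{(0)}(\lambda)|$.

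The main obstacle is the uniform control of the two remainder terms --- the degenerate part of the Hoeffding decomposition and the Bahadur remainder --- simultaneously over all split points $k$ and over a shrinking $t$-neighbourhood of $0$. This requires stochastic equicontinuity (tightness) of the two-parameter U-process $(\lambda,t)\mapsto \sqrt n\,(U_{[n\lambda],n}(t)-U(t))$, which I would derive from moment bounds on its increments; here the polynomial mixing rate $\beta(n)=O(n^{-8})$ and approximating constants $a_n=O(n^{-12})$ enter to guarantee the requisite covariance and higher-moment estimates for weakly dependent U-statistics, while the $\tfrac12$-H\"older continuity of $u$ controls the bias of the kernel over small shifts in $t$ and hence the size of the increments in the $t$-direction. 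A secondary technical point is the behaviour near the endpoints $\lambda \in \{0,1\}$, where the two subsamples are small: the weight $\tfrac{k}{n}(1-\tfrac{k}{n})$ damps these contributions, matching the vanishing of $W^{(0)}$ at the boundary, but one still has to verify that the Bahadur remainder does not blow up there. I expect the tightness/equicontinuity estimate to be by far the most demanding ingredient, and the general limit theory for U-processes announced in the abstract is presumably what supplies it.
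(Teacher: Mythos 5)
Your proposal is correct and follows essentially the same route as the paper: the paper proves Theorem \ref{th:HL-process} by specializing its general U-quantile FCLT (Theorem \ref{th:2uq-fclt}), which is itself assembled exactly as you describe --- Hoeffding decomposition, a sequential FCLT for the linear part built from $F(X_i)-\tfrac12$ (Proposition \ref{prop1}, using $h_{1}(\cdot,0)=-h_{2}(\cdot,0)$ to collapse the two-dimensional Brownian motion to a bridge), a uniform Bahadur representation with the $\lambda(1-\lambda)$ weight absorbing the endpoint degeneracy (Theorem \ref{th:bahadur}), and moment/chaining bounds for the degenerate part over split points and a shrinking $t$-neighbourhood (Lemmas \ref{lem1}--\ref{lem4}). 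The technical obstacles you flag as the hard core are precisely the ones the paper's auxiliary lemmas are devoted to.
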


For a definition of the mixing coefficients and the approximation constants see the next section.
By combining these two notions of weak dependence, our assumptions cover essentially all classes of short range dependence processes.
Long range dependence, arising for instance in  $MA(\infty)$-processes with non-summable coefficients, is not covered by our assumptions, and indeed our results do not apply to long range dependent processes.
\citet{borovkova2001} provide a detailed list of examples of short range dependent processes covered by our theory. For instance, stationary autoregressive moving average processes have an exponential decay of the approximation constants $a_j$ and the mixing coefficients $\beta_j$ are zero for $j\geq 1$.
The approximation constants $a_j$ for GARCH(1,1)-processes also converge to 0 exponentially fast \citep{hansen1991}. More general nonlinear time series models often can be represented as a Volterra sequence
\[
X_i=\sum_{l=1}^\infty\sum_{u_1,\ldots,u_l=0}^\infty g_l(u_1,\ldots,u_l)Z_{i-u_1}\ldots Z_{i-u_l}.
\]
and fulfill the conditions of our theorem if some summability condition hold for the coefficients $g_l(u_1,\ldots,u_l)$ and absolute regularity holds for the sequence $(Z_i: \ i\in\Z)$. Furthermore, many dynamical systems are covered by our assumptions, see \citet{borovkova2001}. Our results are general and can be applied without choosing and fitting specific time series models.

Application of Theorem \ref{th:HL-process} needs consistent estimators for the nuisance parameters
$\sigma$ and $u(0)$. In order not to restrict our analysis to a specific time series model, we use nonparametric estimators. We use overlapping subsampling for estimation of $\sigma$,
\[
 \hat{\sigma}_n = \frac{{\pi}^{1/2}}{ (2l)^{1/2}(n-l+1)} \sum_{i=0}^{n-l} |\sum_{j=i+1}^{i+l} \{F_n(X_j)-0\cddot 5\}|,\]
where $F_n$ is the empirical distribution function of $X_1,\ldots,X_n$.
\citet{dehling2013} have established consistency of the non-overlapping version of this estimator under the same assumptions as made here if the block length $l=l_n\rightarrow\infty$ fulfills $l_n=o(n^{-1/2})$. Consistency of $\hat{\sigma}_n$ under the hypothesis can be shown similarly. To achieve consistency under the alternative of one level shift, we can split the time series into three disjoint subsequences of similar length and use the median of the resulting three separate estimations. This estimates $\sigma$ consistently also under the alternative. However, according to our experience gained in simulations this splitting should only be applied if each subsequence consists of about 200 or more observations, since the median of the right-skewed and downward biased individual estimations is strongly downward biased otherwise.

For the parameter $u(0)$, observe that $u(x)$ is the density of $X-Y$, where $X$ and $Y$ are independent random variables with the same distribution as $X_1$. An estimator of $u(0)$ can be constructed applying a kernel density estimator to the pairwise differences $X_i-X_j$, $1\leq i<j\leq n$, leading to
\[
 \hat{u}(0) = \frac{2}{n(n-1)b} \sum_{1\leq i<j\leq n} K\left(\frac{X_i-X_j}{b}\right),
\]
for a symmetric, Lipschitz-continuous kernel function $K$ which integrates to 1. Below, we show that $\hat{u}(0)$ is a consistent estimator of $u(0)$ under $\Hy_0$, provided that the bandwidth $b=b_n$ is chosen appropriately. According to our experience gained from simulations, we recommend
estimation of $u(0)$ from data sets which are corrected for a possible level shift at each $k \in \{1,\ldots,n-1\}$. For this, we subtract the median pairwise difference $\med\{(X_j-X_i): {1\le i\le k<j\le n}\}$ from $X_{k+1},\ldots,X_n$ when considering the possibility of a shift at time $k$.
Application of $\hat{u}(0)$ to the corrected data $X_1,\ldots,X_{k},X_{k+1}^{(k)},\ldots,X_n^{(k)}$ leads to different estimates $\hat{u}_{k,n}(0)$ and a more powerful test. In the online supplement we prove that the difference $\hat{u}_{k,n}(0)-\hat{u}(0)$ is asymptotically uniformly (with respect to $k$) negligible under $\Hy_0$. Multiplication by $\hat{u}_{k,n}(0)$ thus means consistent scaling under $\Hy_0$ and also at the true position $k^\star$ of a single level shift as its effect in the corrected estimate $\hat{u}_{k^\star,n}(0)$ cancels out.

 The following corollary states that the change-point test statistic proposed in this paper follows  asymptotically a Kolmogorov--Smirnov distribution under $\Hy_0$, like the cumulative sums test statistic, for the estimates of $\sigma^2$ and $u(0)$ discussed before.

\begin{corollary}\label{cor:HL-test}
Under the same assumptions as in Theorem \ref{th:HL-process}, and applying subsampling estimation of $\sigma$ and kernel density estimation of $u(0)$ using block lengths $l_n=o(n^{-1/2})$ and bandwidths $b_n=o(1)$ such that $nb_n^4\to \infty$,
we obtain that the test statistic
\[
T_n=  \frac{{n}^{1/2}}{\hat{\sigma}_n} \max_{1\leq k \leq n} \hat{u}_{k,n}(0)\frac{k}{n} \left(1-\frac{k}{n}\right)
\left| \med\{(X_j-X_i):1\leq i \leq k, k+1\leq j \leq n   \}\right|
\]
converges in distribution to $\sup_{0\leq \lambda \leq 1} \left| W^{(0)}(\lambda)\right|$ under $\Hy_0$, where
$\{W^{(0)}(\lambda):\ 0\le \lambda\le 1\}$ denotes a standard Brownian bridge process.
\end{corollary}

So we can reject the hypothesis if the value of the test statistic $T_n$ exceeds $q_{1-\alpha}$, where $q_{1-\alpha}$ is the $(1-\alpha)$-quantile of the Kolmogorov-Smirnov distribution. This leads to a test with asymptotical level $\alpha$. This test is consistent under fixed alternatives. To see this, we consider fixed values of $\tau\in (0,1)$ and $\Delta\neq 0$ and the sequence of alternatives
\[
  A_n=A_n(\tau,\Delta):\; \mu_1=\ldots=\mu_{[n\tau]}=\mu_{[n\tau]+1}-\Delta=\ldots=\mu_n-\Delta.
\]

\begin{theorem}\label{theo:consistent}
Under the above sequence of alternatives $A_n$ and the assumptions of Theorem 1, $T_n$ converges to infinity in probability as the sample size $n$ increases. 
\end{theorem}


\section{Main Mathematical Results}
\subsection{Near Epoch Dependent Processes}
We derive the asymptotic results in this paper under the assumption of short range dependence.
In the literature, there is a wide range of notions for this.
    We follow an approach used already by \citet{billingsley1968} and \citet{ibragimov1971} and
    assume that the noise process $(Y_i:\ {i\in \Z})$ is near epoch dependent on an absolutely regular process.

\begin{definition}
(i) Let $\A,\B\subset \F$ be two $\sigma$-fields on the probability space $(\Omega,\F,P)$. We define the absolute regularity coefficient $\beta(\A,\B)$ by
\[
 \beta(\A,\B)= E \{\sup_{A\in \A} | \pr(A|\B)-\pr(A) | \}.
\]
(ii) For a stationary process $(Z_i: \ {i\in \Z})$ we define the absolute regularity coefficients
\[
 \beta_j=\sup_{i\geq 1} \beta(\G_1^i,\G_{i+j}^\infty),
\]
where $\G_k^l$ denotes the $\sigma$-field generated by the random variables $Z_k,\ldots, Z_l$. The process $(Z_i:\ {i\in \Z})$ is called absolutely regular if $\beta_j\rightarrow 0$ as $j\rightarrow \infty$.
\\[1mm]
(iii) Let $\{(X_i,Z_i):\ {i\in \Z}\}$ be a stationary process. We say that $(X_i:\ {i \in\N_0})$ is  $L^1$-near epoch dependent on $(Z_i:\ {i\in \Z})$ with approximating constants $(a_j:\ {j\in\N})$, if $\lim_{j\rightarrow \infty }a_j=0$ and
\[
 E\{|X_0-E(X_0|\G_{-j}^j)|\}  \leq a_j.
\]
\end{definition}

\subsection{Two-sample empirical U-quantile process}
   Now we investigate the two-sample empirical quantile process associated with the kernel $g(x,y)$.  We formally define this process, as well as the related two-sample empirical U-process, both in a slightly more general setup of empirical processes indexed by classes of functions.

\begin{definition}
Let $h:\R^2\times \R \rightarrow [0,1]$ be a measurable function, and let $(X_i:\ {i\in\Z})$ be a stochastic process.
\\[1mm]
(i) We define the two-sample empirical U-process
\[
  U_n(\lambda,t)=\frac{1}{\lfloor n\lambda\rfloor (n-\lfloor n\lambda\rfloor)}\sum_{i=1}^{\lfloor n\lambda\rfloor} \sum_{j=\lfloor n\lambda\rfloor+1}^n
h(X_i,X_j,t), \quad 0\leq \lambda\leq 1,\; t\in \R.
\]
(ii) Given $p \in [0,1]$, we define the two-sample empirical U-quantile process
\[
  Q_n(\lambda,p)=\inf\{t:U_n(\lambda,t)\geq p\},\quad 0\leq \lambda \leq 1.
\]
\end{definition}

\begin{remark}
{\rm
(i) Given a kernel $g(x,y)$, we define $h(x,y,t)=1\{g(x,y)\leq t \}$.
Then, $U_n(\lambda,\cdot)$ is the empirical distribution function of the data $g(X_i,X_j)$, $1\leq i\leq \lfloor n\lambda\rfloor < j\leq n$, and $Q_n(\lambda)$ is the $p$-th quantile of the same data.\\
(ii) For fixed $t$, the process $\{U_n(\lambda,t):\ {0\leq \lambda \leq 1}\}$ is a two-sample U-process that has been introduced and investigated by \citet{dehling2015}.
}
\end{remark}

A useful tool for analyzing the asymptotic distribution of the two-sample empirical U-quantile process
$\{Q_n(\lambda):\ {0\leq \lambda \leq 1}\}$ is the following Hoeffding decomposition.

\begin{definition}
Let $h(x,y,t)$ be a measurable function, and let $X,Y$ be independent random variables with the same distribution as $X_i$. Then we define the functions $U(t), h_1(x,t)$, and $h_2(y,t)$ by
\begin{eqnarray}
U(t)&=&E\{h(X,Y,t)\}\label{eq:u_t},\\
h_1(x,t)&=& E\{h(x,Y,t)\}-U(t) \label{eq:h_1},\\
h_2(y,t)&=&E\{h(X,y,t)\}-U(t) \label{eq:h_2}.
\end{eqnarray}
Moreover, let $Q(p)=\inf\{t:U(t)\geq p\}$ be the quantile function and $t_p=Q(p)$ the $p$-quantile.
\end{definition}

Our theorems will require the following technical conditions regarding the process $(X_i:\ {i\geq 1})$ and the kernel $h(x,y,t)$.

\begin{assumption}
(C1) The process $(X_i:\ {i\in\Z})$ is a near epoch dependent functional of an absolutely regular process $(Z_i:\ {i\in \Z})$ with mixing coefficients $(\beta_j:\ j\in \N)$ and approximation constants $(a_j:\ {j\in \N})$, such that for some constant $\beta>3$ we have
\begin{eqnarray*}
\beta_j=O(j^{-\beta}),&&\quad  a_j=O(j^{-(\beta+3)}).
\end{eqnarray*}
(C2) The function $U(t)$, as defined in (\ref{eq:u_t}), is differentiable in a neighborhood of $t_p$. Moreover, $u(t)=U^\prime(t)$ satisfies $u(t_p)>0$, and,
as $t\rightarrow t_p$,
\[
 |U(t)-p-u(t_p)(t-t_p)|=O(|t-t_p|^{3/2}).
\]
(C3) The kernel $h:\R^3\times \R$ is a bounded measurable function. Moreover, $t\mapsto h(x,y,t)$ is nondecreasing, and $(x,y)\mapsto h(x,y,t)$ is
uniformly $1$-Lipschitz continuous in a neighborhood of $t_p$. This means that there exists a neighborhood of $t_p$ and a constant $L>0$ such that
\begin{eqnarray*}
 E\left[|h(X,Y,t) -h(X^\prime,Y,t)|1{\{|X-X^\prime| \leq \epsilon \}}  \right]&\leq& L\, \epsilon, \\
  E\left[|h(X,Y,t) -h(X,Y^\prime,t)|1{\{|Y-Y^\prime| \leq \epsilon \}}  \right]&\leq & L\, \epsilon
\end{eqnarray*}
holds for all $t$ in this neighborhood, for all $\epsilon >0$, and for all quadruples $X,Y,X^\prime,Y^\prime$ of random variables such that $(X,Y)$ has joint distribution $\pr_{X_1}\times \pr_{X_1}$ or $\pr_{X_1,X_k}$, for some $k$, and such that $X^\prime$ and $Y^\prime$ each have the same marginal distribution as $X_i$.
\end{assumption}

\begin{theorem}
Let $\{X_i:\ {i\in\Z}\}$ be a near epoch dependent functional of an absolutely regular process such that assumption (C1) is satisfied and  $h:\R^3\times \R\rightarrow \R$ a measurable kernel such that assumptions (C2) and (C3) hold. Then we have the following convergence in distribution:
\[
\left[{n}^{1/2}\lambda (1-\lambda)\{Q_n(\lambda,p)-Q(p)\}: {0\leq \lambda \leq 1}\right]
 \claw \left[(1-\lambda) W_1(\lambda) +\lambda \{W_2(1)-W_2(\lambda)\}: {0\leq \lambda\leq 1}\right],
\]
where $\{W_1(\lambda),W_2(\lambda)\}$ is a two-dimensional Brownian motion with covariance structure
\[
 \Cov\{W_i(\mu),W_j(\lambda)\}=(\mu\wedge \lambda)\frac{1}{u^2\{Q(p)\}}
 \sum_{k\in \Z} E[h_{i}\{X_0, Q(p)\}, h_{j}\{X_k,Q(p)\}].
\]
\label{th:2uq-fclt}
\end{theorem}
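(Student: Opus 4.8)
The plan is to prove a Bahadur-type representation that linearizes the U-quantile process in terms of the two-sample empirical U-process, and then transfer a functional central limit theorem from the latter to the former. Writing $t_p=Q(p)$ and abbreviating $m=[n\lambda]$, the defining identity $U_n(\lambda,Q_n(\lambda,p))\approx p$, combined with $U(t_p)=p$ and the first-order expansion in (A2), suggests
\[
Q_n(\lambda,p)-t_p\approx-\frac{U_n(\lambda,t_p)-p}{u(t_p)}.
\]
Multiplying by $\sqrt{n}\,\lambda(1-\lambda)$ reduces the problem to identifying the weak limit of $\sqrt{n}\,\lambda(1-\lambda)(U_n(\lambda,t_p)-p)$ as a process in $\lambda$. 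The weight $\lambda(1-\lambda)$ is indispensable: it damps the erratic behaviour of $Q_n(\lambda,\cdot)$ near $\lambda=0$ and $\lambda=1$, where one of the two subsamples has vanishing relative size.

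First I would treat the linear process at the fixed level $t_p$. The Hoeffding decomposition
\[
h(X_i,X_j,t_p)-U(t_p)=h_1(X_i,t_p)+h_2(X_j,t_p)+r(X_i,X_j,t_p),
\]
with $h_1,h_2$ from \eqref{eq:h_1}--\eqref{eq:h_2} and $r$ the degenerate remainder, gives
\[
\sqrt{n}\,\lambda(1-\lambda)(U_n(\lambda,t_p)-p)=(1-\lambda)B_{1,n}(\lambda)+\lambda(B_{2,n}(1)-B_{2,n}(\lambda))+R_n(\lambda),
\]
where $B_{i,n}(\lambda)=n^{-1/2}\sum_{l=1}^{[n\lambda]}h_i(X_l,t_p)$ and $R_n$ collects the degenerate contribution. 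Under (A1) the centred sequences $(h_1(X_l,t_p))_l$ and $(h_2(X_l,t_p))_l$ are again near epoch dependent functionals of the underlying absolutely regular process, so a bivariate functional central limit theorem for partial sums yields $(B_{1,n},B_{2,n})\claw(B_1,B_2)$, a two-dimensional Brownian motion whose covariance is the long-run covariance matrix of $(h_1(\cdot,t_p),h_2(\cdot,t_p))$; dividing by $-u(t_p)$ produces exactly the covariance displayed in the theorem, with the factor $u^{-2}(Q(p))$. The degenerate part is controlled by a second-moment bound for the remainder U-statistic: using the uniform Lipschitz estimates in (A3) together with the summability guaranteed by $\beta(n)=O(n^{-\beta})$, $a_n=O(n^{-(\beta+3)})$, $\beta>3$, one shows $\sup_\lambda E\,R_n(\lambda)^2\to0$, so $R_n$ is asymptotically negligible.

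The main obstacle is to upgrade the heuristic to a \emph{uniform} Bahadur representation, namely
\[
\sup_{0\leq\lambda\leq1}\Bigl|\sqrt{n}\,\lambda(1-\lambda)(Q_n(\lambda,p)-t_p)+\frac{\sqrt{n}\,\lambda(1-\lambda)(U_n(\lambda,t_p)-p)}{u(t_p)}\Bigr|\stackrel{P}{\longrightarrow}0.
\]
This rests on two estimates. The first is a uniform oscillation (stochastic equicontinuity) bound for the two-sample empirical U-process: for a suitable sequence $\delta_n\downarrow0$,
\[
\sup_{|t-t_p|\leq\delta_n}\ \sup_{0\leq\lambda\leq1}\lambda(1-\lambda)\bigl|(U_n(\lambda,t)-U(t))-(U_n(\lambda,t_p)-U(t_p))\bigr|=o_P(n^{-1/2}),
\]
which I would obtain by a chaining argument over $t$, feeding in variance bounds for two-sample U-statistics of near epoch dependent functionals and the $p$-Lipschitz continuity of (A3); the mixing and approximation rates in (A1) are precisely what make the relevant moment series converge. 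The second is a uniform consistency bound $\sup_{\varepsilon\leq\lambda\leq1-\varepsilon}|Q_n(\lambda,p)-t_p|\stackrel{P}{\longrightarrow}0$, which follows from the monotonicity of $t\mapsto U_n(\lambda,t)$, pointwise consistency of $U_n$, and the strict positivity $u(t_p)>0$ in (A2). The quadratic remainder from (A2) is harmless, since $|Q_n(\lambda,p)-t_p|=O_P(n^{-1/2})$ makes $\sqrt{n}\,|Q_n(\lambda,p)-t_p|^{3/2}=O_P(n^{-1/4})$. The delicate point is the region near the endpoints: for $\lambda\notin[\varepsilon_n,1-\varepsilon_n]$ I would bound the contribution to the supremum directly, exploiting $h\in[0,1]$ and the weight $\lambda(1-\lambda)$ to show it is negligible, while on $[\varepsilon_n,1-\varepsilon_n]$ the two estimates above apply uniformly.

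Combining the uniform Bahadur representation with the functional central limit theorem of the first step and applying the continuous mapping theorem then gives
\[
\sqrt{n}\bigl(\lambda(1-\lambda)(Q_n(\lambda,p)-Q(p))\bigr)_{0\leq\lambda\leq1}\claw\bigl((1-\lambda)W_1(\lambda)+\lambda(W_2(1)-W_2(\lambda))\bigr)_{0\leq\lambda\leq1},
\]
where $W_i=-u(t_p)^{-1}B_i$ carries the covariance stated in the theorem; since $(W_1,W_2)$ is a centred Gaussian process the sign is immaterial, and the claimed limit follows.
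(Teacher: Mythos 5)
Your proposal follows essentially the same route as the paper: a uniform, $\lambda(1-\lambda)$-weighted Bahadur--Kiefer representation (proved via chaining/oscillation bounds for the two-sample $U$-process near $t_p$ and moment bounds for the degenerate part) combined with the functional CLT for $\sqrt{n}\,\lambda(1-\lambda)(U_n(\lambda,t_p)-p)$, which the paper imports from Dehling, Fried, Garc\'ia and Wendler (2013) rather than re-deriving. The only substantive differences are presentational: the paper works with $\lambda$-dependent neighbourhoods $|t-t_p|\leq C\sqrt{\log\log(\lambda n)/(\lambda n)}$ and an explicit $O_P(n^{-5/9})$ rate instead of your fixed $\delta_n$ plus endpoint truncation, but the underlying ideas coincide.
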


An important ingredient in the proof of the limit theorem for the two-sample $U$-quantile process is the Bahadur--Kiefer representation of the $U$-quantiles, see \citet{bahadur1966}. The Bahadur-representation for two-sample $U$-quantiles (with fixed $\lambda$) has been studied by \citet{inagaki1973} for independent data and by \citet{dehling2012} for dependent data. To the best of our knowledge, there are no results for the process indexed by $\lambda$. There is much more literature on one-sample $U$-quantiles, beginning with \citet{geertsema1970}. In this case, better rates of the Bahadur representation are known, see \citet{wendler2011}.

\begin{theorem}
Under the same assumptions as in Theorem~\ref{th:2uq-fclt}, we obtain
\[
 \sup_{0\leq \lambda \leq 1}\!
  \lambda (1-\lambda) \left[\! Q_n(\lambda,p)-Q(p) +
 \frac{U_n\{\lambda,Q(p)\}-p}{u\{Q(p)\}}\!  \right] = O_{\pr}(n^{-\frac{5}{9}}).
\]
\label{th:bahadur}
\end{theorem}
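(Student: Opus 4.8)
The plan is to derive the representation from the Hoeffding decomposition of the two-sample empirical $U$-process, a uniform modulus-of-continuity estimate for its linear part, and the local expansion of $U$ provided by (A2). Since $U_n(\lambda,t)$ and $Q_n(\lambda,p)$ depend on $\lambda$ only through $[n\lambda]$, the supremum over $\lambda\in[0,1]$ is in fact a maximum over the $n-1$ splitting points $\lambda=k/n$, $1\leq k\leq n-1$; there is thus no genuine chaining in $\lambda$, only a union bound over $k$. First I would write, for $t$ in a neighborhood of $Q(p)$,
\[
U_n(\lambda,t)-U(t)=\frac{1}{[n\lambda]}\sum_{i=1}^{[n\lambda]}h_1(X_i,t)+\frac{1}{n-[n\lambda]}\sum_{j=[n\lambda]+1}^{n}h_2(X_j,t)+D_n(\lambda,t),
\]
where $h_1,h_2$ are the projections from \eqref{eq:h_1}--\eqref{eq:h_2} and $D_n$ is the degenerate remainder built from $\tilde h(x,y,t)=h(x,y,t)-U(t)-h_1(x,t)-h_2(y,t)$. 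Using maximal inequalities for degenerate $U$-statistics of processes that are near epoch dependent on an absolutely regular sequence, the bounds $\beta(n)=O(n^{-\beta})$ and $a_n=O(n^{-(\beta+3)})$ of (A1) give $\sup_{\lambda}\sup_{|t-Q(p)|\leq\delta}\lambda(1-\lambda)|D_n(\lambda,t)|=O_P(n^{-1}(\log n)^{c})$, which is negligible relative to the target rate.

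The core of the argument is a uniform modulus-of-continuity estimate for the two linear partial-sum processes. Setting $G_n(\lambda,t)=\frac{1}{\sqrt n}\sum_{i=1}^{[n\lambda]}(h_1(X_i,t)-h_1(X_i,Q(p)))$, the Lipschitz bounds in (A3) ensure that the increments $h_1(X_i,t)-h_1(X_i,Q(p))$ have variance of order $|t-Q(p)|$, so the long-run variance of the partial sum is of order $n\,|t-Q(p)|$. A chaining argument over a dyadic grid in $t$, together with the fixed-order moment bounds permitted by (A1), then yields $\sup_{\lambda}\sup_{|t-Q(p)|\leq\delta}|G_n(\lambda,t)|=O_P(\sqrt{\delta}\,n^{\epsilon})$, and similarly for the second sample. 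Since $\frac{1}{[n\lambda]}\sum_{i=1}^{[n\lambda]}(\cdots)=\frac{\sqrt n}{[n\lambda]}G_n(\lambda,t)$, multiplication by the weight $\lambda(1-\lambda)$ cancels the dangerous factor $1/\lambda$ (respectively $1/(1-\lambda)$) arising from the minority sample near each endpoint; this is exactly where the $\lambda(1-\lambda)$ normalization of the statement is used.

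Finally I would feed in the preliminary rate. Theorem~\ref{th:2uq-fclt} already gives $\sup_\lambda\lambda(1-\lambda)|Q_n(\lambda,p)-Q(p)|=O_P(n^{-1/2})$, so the relevant neighborhood has radius $\delta_n$ slightly larger than $n^{-1/2}$. Evaluating the decomposition at $t=Q_n(\lambda,p)$, where by monotonicity of $U_n(\lambda,\cdot)$ the value $U_n(\lambda,Q_n(\lambda,p))$ exceeds $p$ by at most one jump of size $([n\lambda](n-[n\lambda]))^{-1}$, and expanding $U$ via (A2) as $U(t)-p=u(Q(p))(t-Q(p))+O(|t-Q(p)|^{3/2})$, one solves for the remainder and bounds it by the sum of the degenerate term, the modulus term $n^{-1/2}\sqrt{\delta_n}\,n^{\epsilon}$, and the quadratic term $\delta_n^{3/2}$ from (A2). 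The exponent $5/9$ emerges from optimizing the dyadic grid spacing in $t$ against the fixed polynomial moment order that (A1) makes available, together with the union bound over the $n$ values of $k$: with only finitely many moments one cannot reach the i.i.d.\ Kiefer rate $n^{-3/4}$, and $n^{-5/9}$ is the best compromise these weak-dependence bounds permit.

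The hard part will be the uniform modulus-of-continuity estimate for the two-sample $U$-process under weak dependence, uniformly over both $t$ and the splitting point $k$. Handling the endpoints requires splitting the range of $k$ into a central region, where the fixed-$\lambda$ techniques of Dehling and Fried (2012) apply after a careful union bound, and two boundary regions, where the smallness of the weight $\frac{k}{n}(1-\frac{k}{n})$ must be played off against the deteriorating concentration of the minority sample. Establishing the chaining bound with only polynomial (Markov-type) deviation inequalities, rather than the sub-Gaussian tails available in the independent case, is the principal technical obstacle and the source of the loss from $3/4$ to $5/9$.
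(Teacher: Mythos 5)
Your overall architecture (Hoeffding decomposition, a modulus-of-continuity bound for the linear parts obtained by chaining over a dyadic grid in $t$, a maximal inequality for the degenerate part, localization of the quantile, and the expansion of $U$ from (A2)) matches the paper's, which runs this program through Lemmas \ref{lem1}--\ref{lem4} and \ref{lem6}. But there is a genuine gap in your localization step. You propose to obtain the a priori neighborhood $\delta_n\gtrsim n^{-1/2}$ from Theorem \ref{th:2uq-fclt}; in the paper that theorem is \emph{deduced from} Theorem \ref{th:bahadur}, so this is circular. The paper instead localizes independently via Lemma \ref{lem4}, a law-of-the-iterated-logarithm--type bound $\sup_{n_2>n_1}|U_{n_1,n_2}(t_p)-p|=O(\sqrt{\log\log n_1/n_1})$ a.s., transferred to the quantile by the generalized-inverse Lemma \ref{lem6}. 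Relatedly, even a correct weak-convergence statement for the \emph{weighted} process only controls $\lambda(1-\lambda)|Q_n(\lambda,p)-Q(p)|$; for $\lambda$ of order $1/n$ this says nothing about $Q_n(\lambda,p)$ itself being near $Q(p)$, so a $t$-neighborhood of uniform radius $\approx n^{-1/2}$ is simply wrong near the endpoints. The paper's oscillation estimate (Lemma \ref{lem3}) is therefore taken over the $\lambda$-dependent neighborhood $|t-t_p|\leq C\sqrt{\log\log(\lambda n)/(\lambda n)}$, and the endpoint regions are handled by splitting at $n_1\leq n^{4/9}$, where the boundedness of $h$ and the factor $n_1/n\leq n^{-5/9}$ do the work. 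Your last paragraph gestures at exactly this difficulty but does not supply the mechanism that resolves it.

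Two smaller points. First, your claim that the degenerate part is $O_P(n^{-1}(\log n)^{c})$ \emph{uniformly over the $t$-neighborhood} is not what the available second-moment maximal inequality (Lemma \ref{lem2}, $E(\max|\cdot|)^2\leq C2^{2l}l^4$) yields once you pay for the union bound over the $\approx c_n^{-1}$ grid points in $t$; the paper's computation gives $o(c_n)=o(n^{-5/9})$ for this term, which suffices but is far from $n^{-1}$. Second, for a general monotone kernel satisfying (A3) the jump of $t\mapsto U_n(\lambda,t)$ at a point need not be bounded by $([n\lambda](n-[n\lambda]))^{-1}$ (all $n_1n_2$ summands may jump at the same $t$), so your bound on $U_n(\lambda,Q_n(\lambda,p))-p$ needs justification; the paper avoids this by bounding $U^{-1}_{n_1,n_2}(U_{n_1,n_2}(t_p))-t_p$ directly, using that it is nonpositive by definition of the generalized inverse and bounded below via monotonicity in $n_2$ together with Lemma \ref{lem3} and the differentiability of $U$.
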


\section{Simulation Results}

We illustrate the practical value of the theoretical results presented above in a simulation study using the statistical software R \citep{R2016}. Time series are generated from autoregressive moving average models,
\[Y_i=\phi_1Y_{i-1}+\phi_2Y_{i-2}+\theta\epsilon_{i-1}+\epsilon_i,\]
 where $(\epsilon_i:\ i\in\Z)$ are Gaussian, $t_3$- or $\chi_3^2$-distributed white noise innovations, scaled to have zero mean and unit variance and representing normal tails, heavy tails and skewness, respectively. We consider a broad range of practically relevant positive autocorrelation structures, namely first order models with $\phi_1=0,0\cddot 4,0\cddot 8$ or $\theta=0\cddot 5,0\cddot 8$, second order models with $(\phi_1,\phi_2)=(0\cddot 4,0\cddot 3)$, and mixed models with $(\phi_1,\theta)= (0\cddot 3,0\cddot 5)$, setting the other parameters to zero.

Besides the cumulative sum test, we compare our test to  further competitors also designed for shift detection in weakly dependent data. Extending work by \citet{deJong2000}, \citet{huskova2012} suggest a version of the cumulative sum test based on the partial sums of M-residuals $\psi\{Y_i-\hat{\mu}_n(\psi)\}$, replacing the sign function used by the former authors by the Huber function $\psi(x)=x 1(|x|\le c\hat{\kappa}_n)+c\hat{\kappa}_n1(|x|>c\hat{\kappa}_n)$, where $\hat{\kappa}_n$ is a robust estimate of the standard deviation of the observations.
The Huber function comprises the sign and the identity function as limiting cases as the tuning constant $c\in [0,\infty)$ approaches zero or infinity, respectively. We call this the Huberization test, using the sample median $\hat{\mu}_n$ and median absolute deviation about the median $\hat{\kappa}_n$ of the data for standardization, and $c=1\cddot 5$ to achieve a reasonable  compromise between performance under normality and under heavy-tails.

\citet{dehling2015} construct a Wilcoxon change-point test based on the two-sample Wilcoxon--Mann--Whitney statistic. \citet{vogel2017} use the difference between the one-sample Hodges--Lehmann estimators
$\med\{(X_i+X_j)/2:i=1,\ldots,j-1,j=2,\ldots,l\}$ obtained from the first $l=k$ and all $l=n$ observations
to construct a robust alternative to the cumulative sum test, called one-sample Hodges--Lehmann change-point test hereafter.

For estimation of the respective long-run variances needed for standardization of the test statistics, we follow \citet{carlstein1986} and
  \citet{dehling2013}, who suggest overlapping subsampling with Carlstein's adaptive block length
$l_n=\max[\lceil n^{1/3}\{2\hat{\phi}/(1-\hat{\phi}^2)\}^{2/3}\rceil,1]$ 
depending on the lag-one sample autocorrelation $\hat{\phi}$.
In case of the Wilcoxon and the Hodges--Lehmann change-point tests we use Spearman's rank autocorrelation for this, while it is the ordinary sample autocorrelation of the possibly transformed data in case of the ordinary and the Huberization cumulative sums test.
For comparison we also comment on a fixed block length $l= \lceil (3n)^{1/3}+1\rceil$, which is appropriate for a first order autoregression with $\phi=0\cddot 5$ according to Carlstein's rule. The tests based on the one- or two-sample Hodges-Lehmann estimators additionally need estimation of a density at its median which is done as discussed before Corollary 1.

First we assess the sizes of the tests, applying them with the asymptotic critical value 1$\cddot$36 at a nominal significance level of 5\%. We generate 6000 time series for each of the 21 combinations of an innovation distribution and an autoregressive moving average  model mentioned above, considering time series of lengths $n=100$ and $n=200$. The standard errors of the estimated rejection rates are about 0$\cddot$3\%. Figure \ref{fig:sizes} presents dotplots of the empirical sizes for the different settings.
The adaptive block length works well also in the presence of an additional moving average component but leads to oversized tests for the second order autoregression. These problems with the size are larger if $n=100$, where only the Wilcoxon change-point test keeps its nominal size for almost all scenarios. Increasing the sample size to $n=200$ reduces the problems for the tests based on the Hodges--Lehmann estimators.
Subsampling with the fixed block length (not shown here) leads to substantially oversized tests in case of strong positive autocorrelations.


\begin{figure}[htbp!]\centering
\resizebox{8.5cm}{8.5cm}{\includegraphics{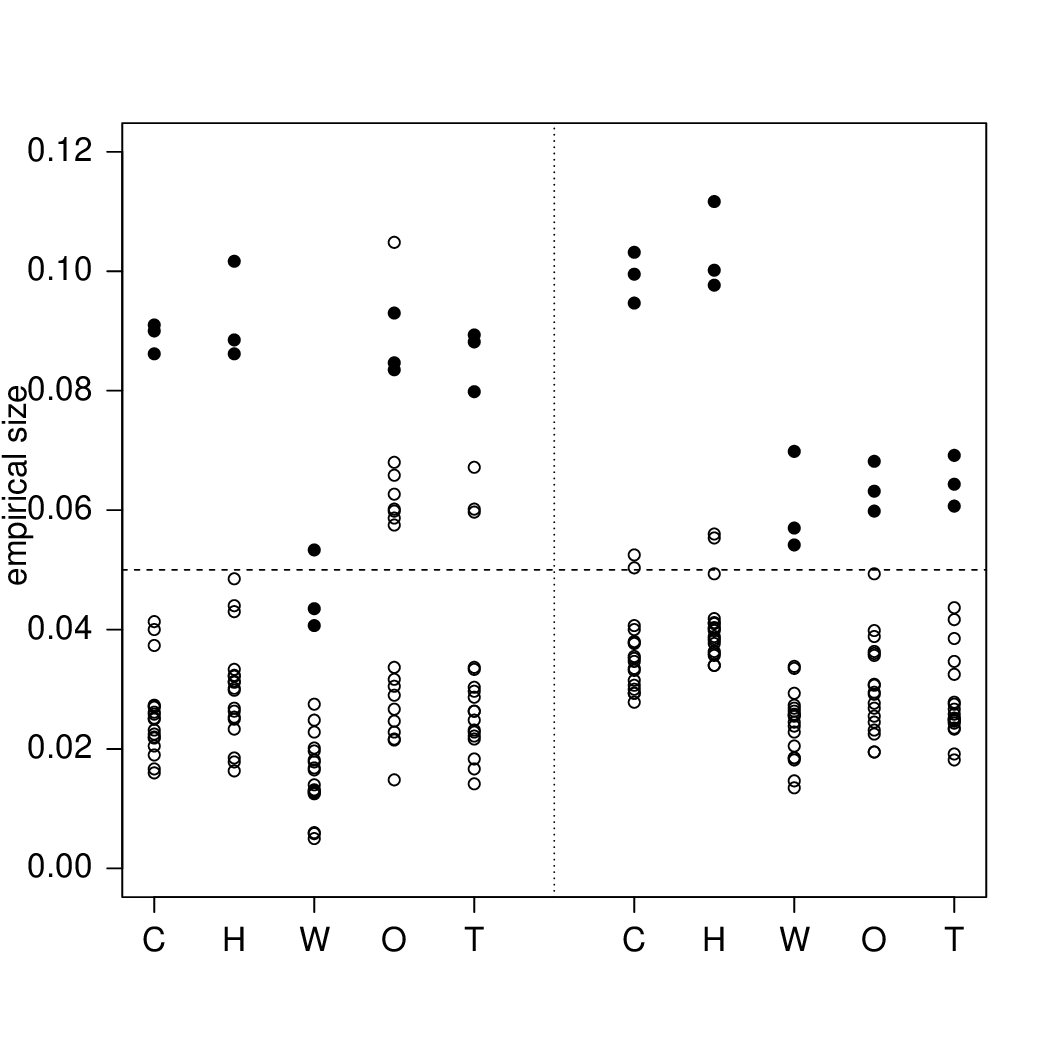}}\\ 
\caption{\label{fig:sizes}Empirical sizes in percent for time series of length $n=100$ (left) and $n=200$ (right) generated from different time series models with normal, heavy-tailed or skewed innovations.
Cumulative sums test (C) and its variants using Huberization (H), the Wilcoxon (W), one- (O) or two-sample (T) Hodges--Lehmann statistic.
Subsampling with an adaptive block length assuming a first order autoregression keeps the nominal significance level 5\% well also in the presence of a moving average part if $n=200$, but has some difficulties with the second order autoregression represented by filled dots, particularly for H and C.
 }
\end{figure}

Next we compare the power of the tests after size-correction to achieve a fair comparison,
 using the respective 95\% percentiles obtained in the simulations under $\Hy_0$ for the same scenario as critical values.
The power of the size-corrected tests is examined by generating 600 time series of length $n=100$ or $n=200$ from each model and each of ten different shift heights and two change-point positions $\tau=\delta n$, $\delta=0\cddot5$ or $\delta=0\cddot75$. Larger heights are considered for stronger positive autocorrelations and for change-points outside the center of the time series, since detection of small shifts becomes more difficult then. Table \ref{tab:heights} reports the shift heights which were chosen after some experiments such that the more powerful tests achieve powers of about 95\% in case of the highest shifts considered.

\begin{table}
\caption{
Smallest height of a shift considered for each model. These heights are multiplied by $1,\ldots,10$ to achieve increasingly large shifts. These heights are multiplied by 1$\cddot$5 if the shift is not in the center but after 75\% of the observations, and additionally by 1$\cddot$4 if $n=100$}{
\begin{tabular}{cccccccccc}
$\phi_1,\phi_2,\theta$ & $0,0,0$ & 0$\cddot$4,0,0& 0$\cddot$8,0,0 & 0$\cddot$4,0$\cddot$3,0 & 0$\cddot$4,0,0$\cddot$5 & 0,0,0$\cddot$5 &
0,0,0$\cddot$8\\
Heights  & 0$\cddot$06 & 0$\cddot$1 & 0$\cddot$25 & 0$\cddot$2 & 0$\cddot$13 & 0$\cddot$1 & 0$\cddot$12
\end{tabular}}\label{tab:heights}
\end{table}

Figure \ref{fig:powercurves} depicts estimated power curves of the size-adjusted tests with adaptive subsampling for shifts of increasing height in independent observations.
 The tests perform quite similar if a shift occurs in the center of independent Gaussian data. If a shift occurs outside the center, the Wilcoxon change-point test gets worse. In case of heavy-tailed $t_3$- or skewed $\chi_3^2$-distributed observations the ordinary cumulative sums test loses a lot of its power. In case of $\chi_3^2$-observations this also applies to the Huberization and even more to the one-sample Hodges--Lehmann change-point test. Huberization transforms the data symmetrically, which is not the best solution if the data are skewed. The problems of the one-sample Hodges-Lehmann change-point test can be explained by the lack of efficiency of the underlying estimator as compared to the mean or the median difference in case of skewed distributions  \citep{hoyland1965}.

\begin{figure}[htbp!]\centering
\resizebox{15.5cm}{5.8cm}{\includegraphics{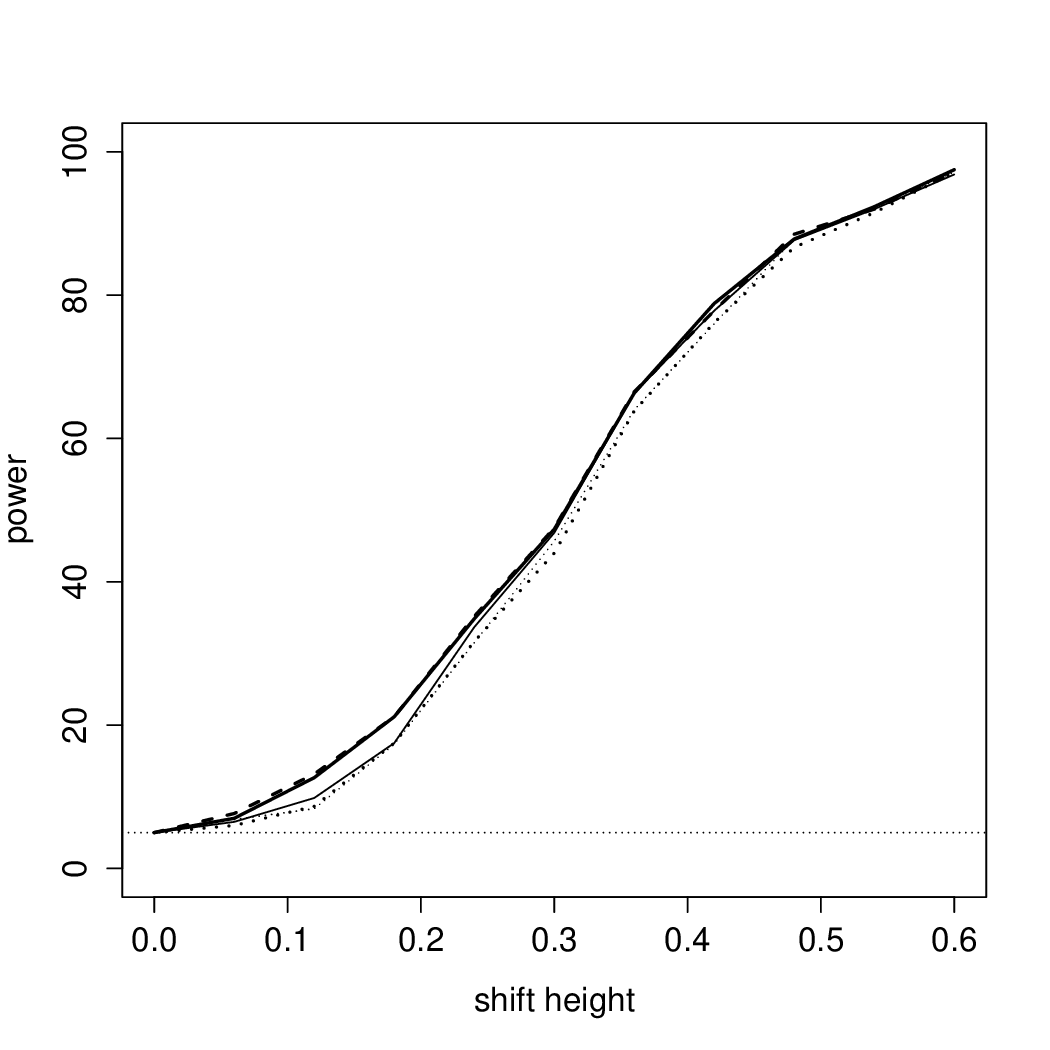}\includegraphics{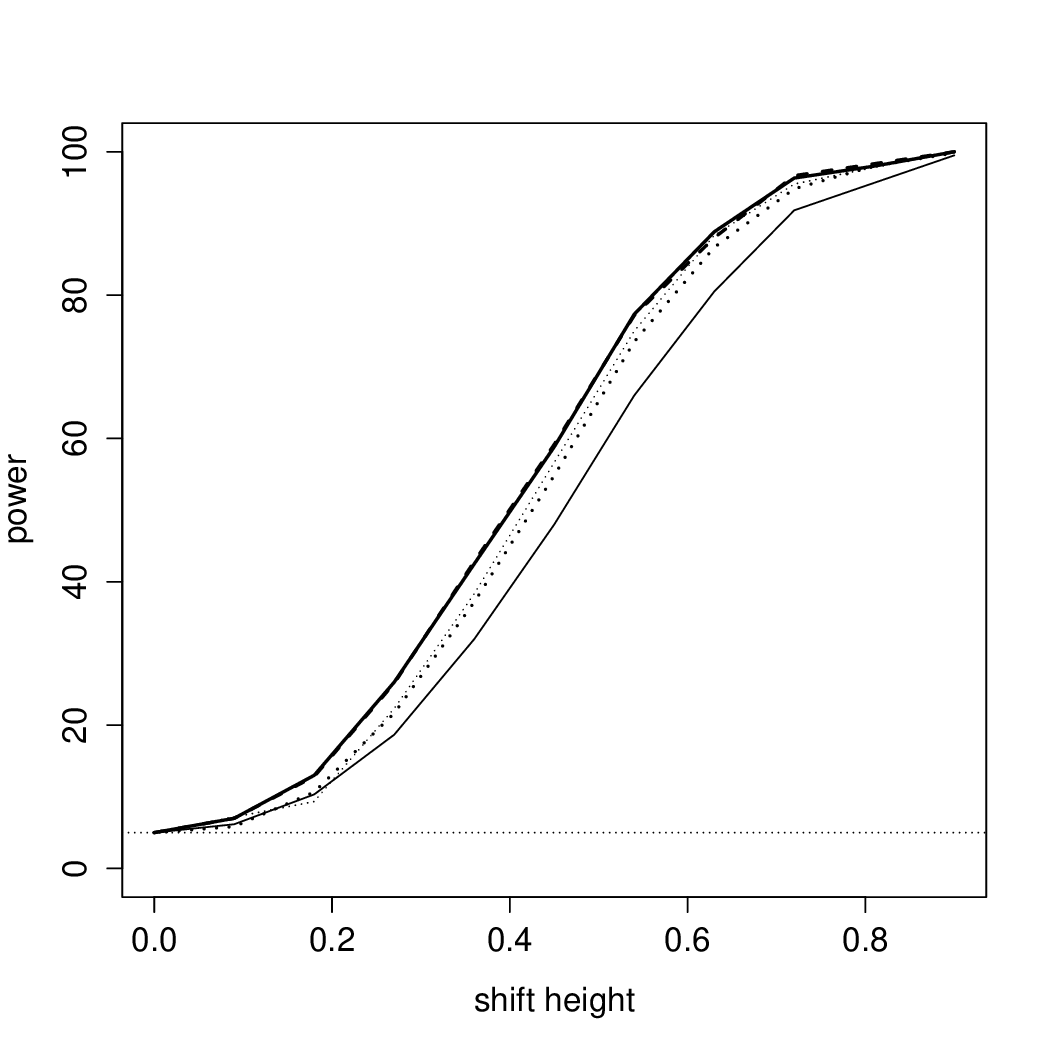}}\\
\resizebox{15.5cm}{5.8cm}{\includegraphics{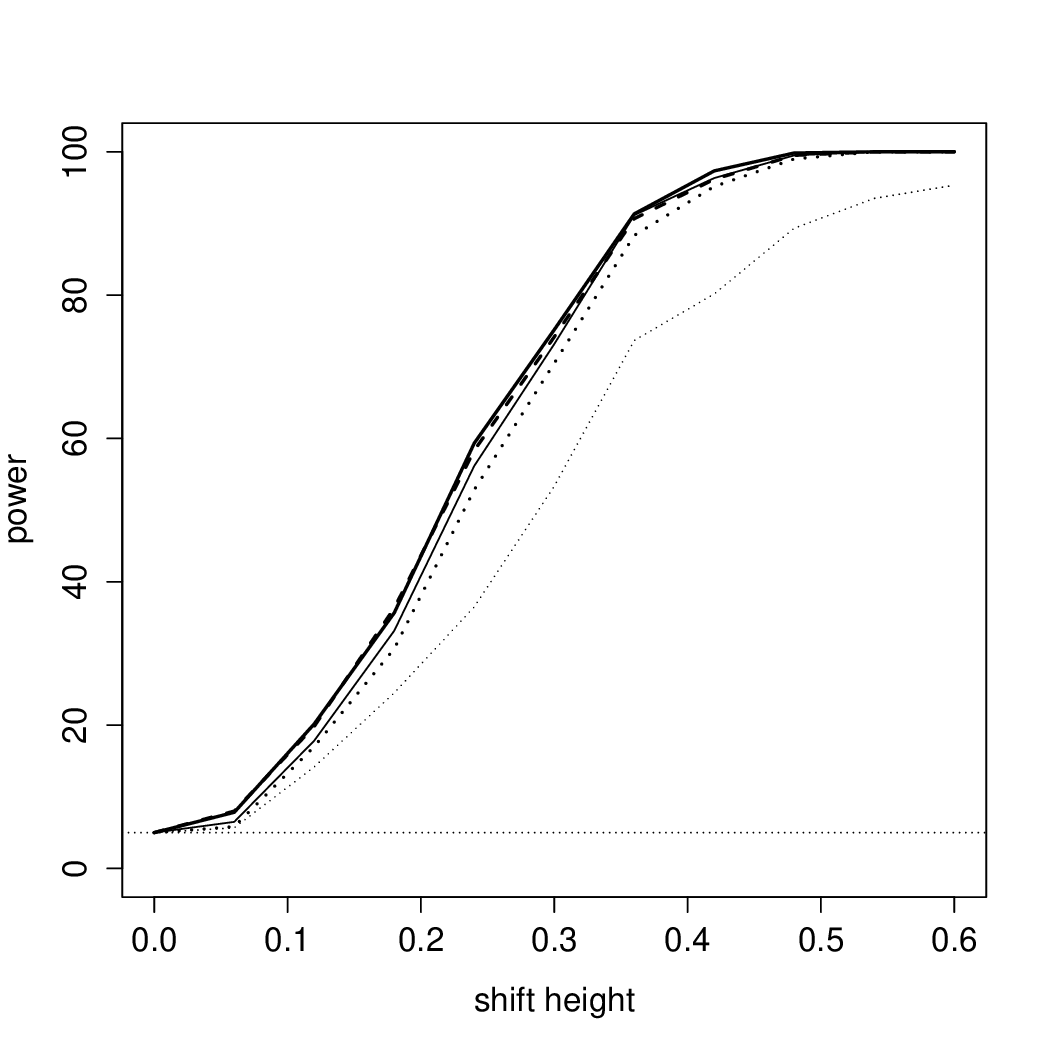}\includegraphics{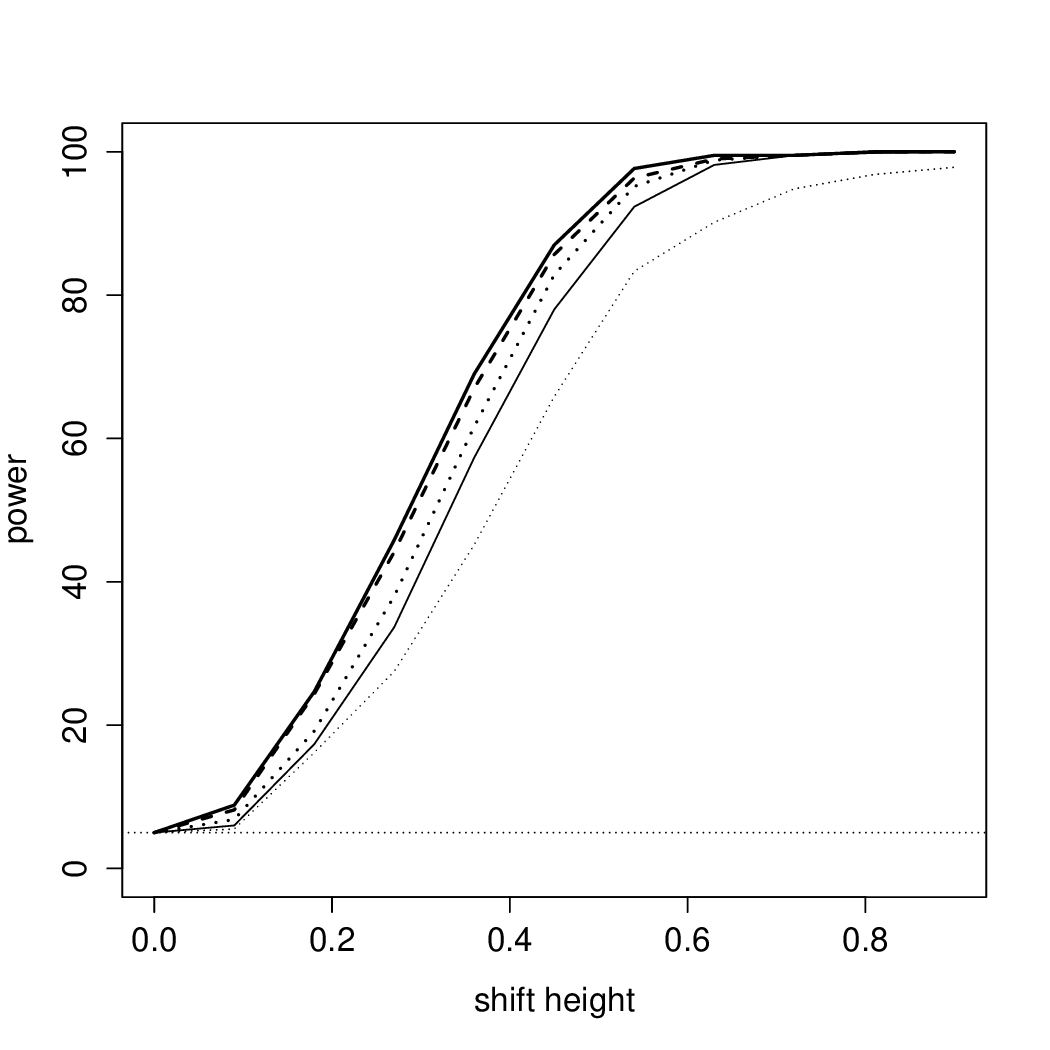}}\\
\resizebox{15.5cm}{5.8cm}{\includegraphics{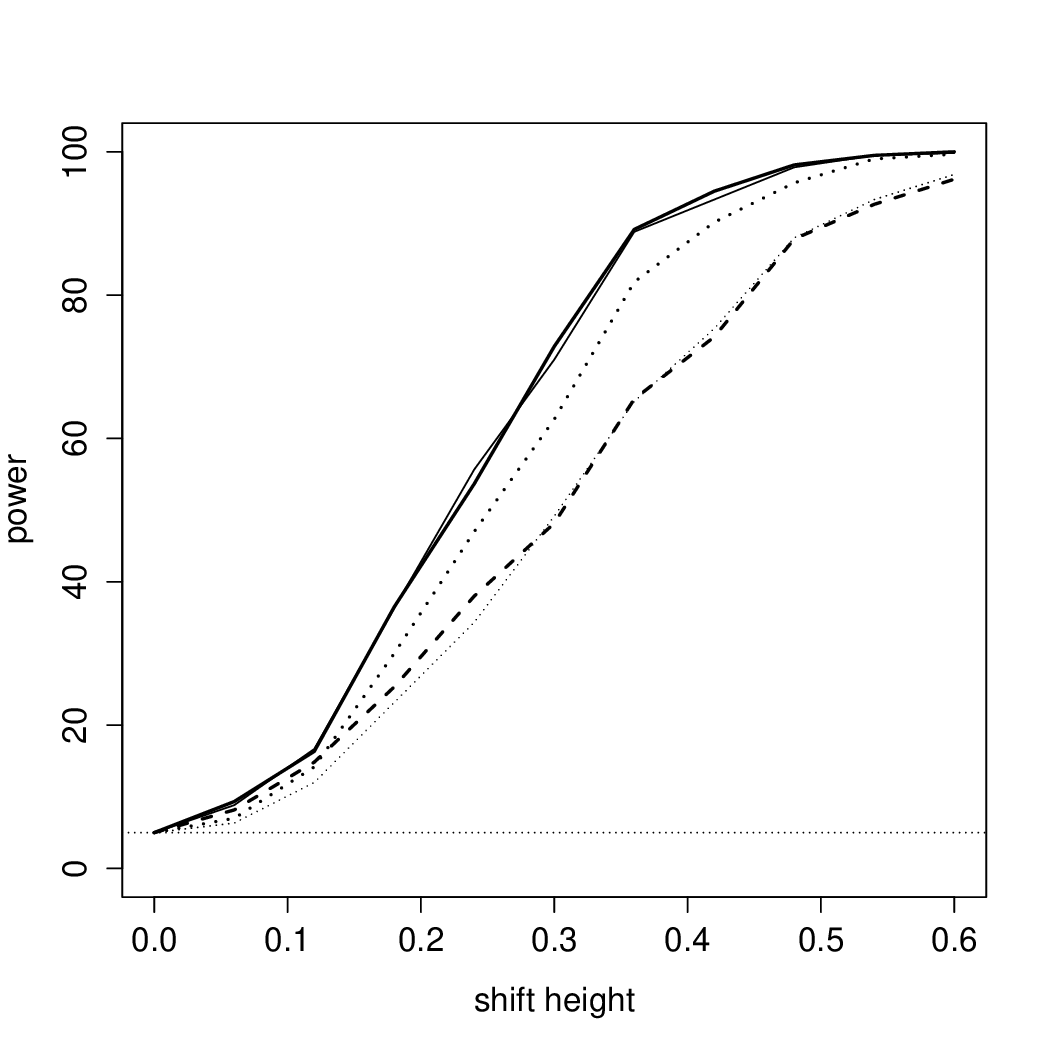}\includegraphics{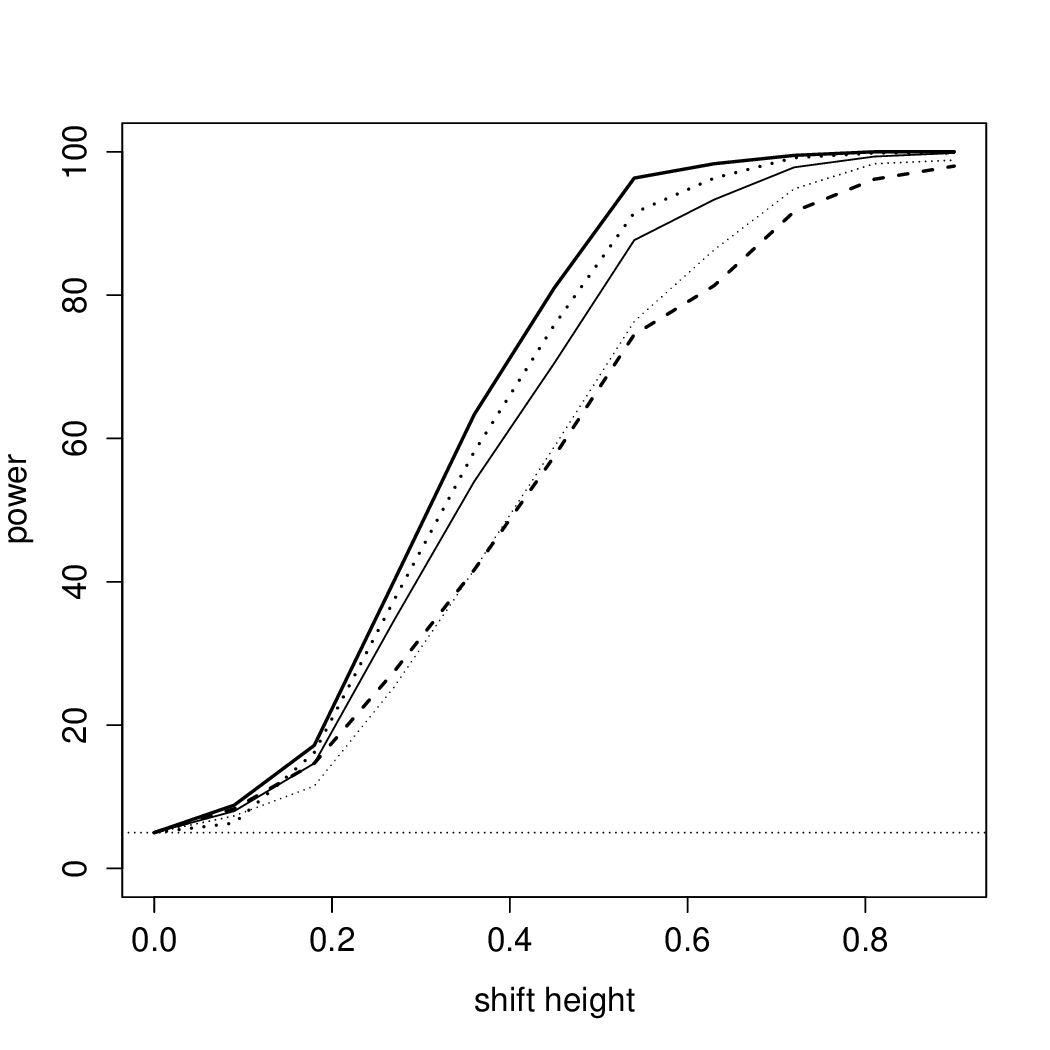}}\\
\caption{\label{fig:powercurves}Size-adjusted power in percent of the tests with adaptive subsampling in case of a shift after  100 (left) or 150 (right) out of  $n=200$ independent normally (top), $t_3$- (center) or $\chi_3^2$-distributed (bottom) observations. Cumulative sums test based on ordinary (thin dots) or Huberized observations (bold dots), on the Wilcoxon (thin solid), the two- (bold solid) or one-sample Hodges--Lehmann estimate (bold dashes). The pointwise standard errors of the power estimates are at most about 2\%.
 }
\end{figure}

We summarize the power curves  of the size-adjusted tests by calculating the average power across all shift heights for each model scenario. Figure \ref{fig:powers} shows dotplots of the average powers resulting for the different scenarios.
The ordinary cumulative sum test, the Wilcoxon and the Huberization change-point tests have some difficulties detecting shifts in data with strong positive autocorrelations, that is, the second order autoregression and the first order autoregression with $\phi_1=$0$\cddot$8, if there are only $n=100$ observations available. Only the tests based on the Hodges-Lehmann statistics do not show particular weaknesses, with the test based on the two-sample statistic advocated here providing the largest average powers.

\begin{figure}[htbp!]\centering
\resizebox{8.5cm}{8.5cm}{\includegraphics{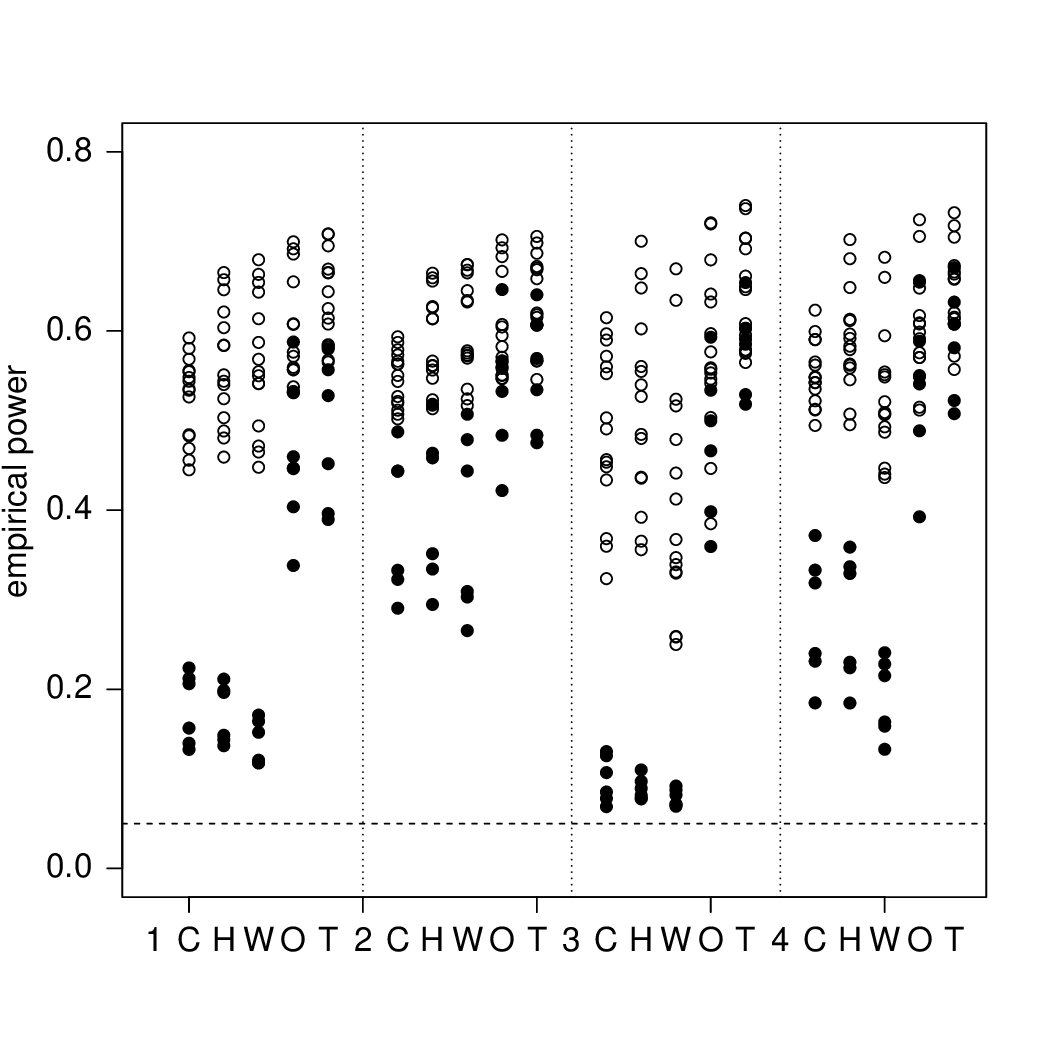}}\\
\caption{\label{fig:powers}Estimated average powers of the size-adjusted tests for different time series models and a shift in the center (panels 1 and 2) or after 75\% of the observations (panels 3 and 4) in a time series of length $n=100$ (panels 1 and 3) or $n=200$ (panels 2 and 4). Cumulative sums test (C) and its variants based on Huberization (H), Wilcoxon (W), one- (O) or two-sample Hodges--Lehmann (T) statistics. Results for the second and the first order autoregression with large positive autocorrelations are worse for some of the methods and depicted by filled black dots.    }
\end{figure}

For a more detailed comparison we perform an analysis of variance for the average size-adjusted powers of the tests.  Shifts inside and outside the center of the time series are analyzed separately, including main effects for the dependence structure and the innovation distribution but ignoring possible interactions. Table \ref{tab:power} in the online supplement reports the detailed results.

It turns out that the test based on the two-sample Hodges--Lehmann estimator is competitive to or better than the other tests in all scenarios considered here. The test based on the one-sample Hodges--Lehmann estimator performs similar to it for the symmetric innovation distributions but worse for the skewed $\chi_3^2$-distributed innovations. This agrees with the smaller asymptotic efficiency of the underlying estimator. The other tests perform somewhat less powerful than these in the presence of strong positive autocorrelations, particularly for the smaller sample size $n=100$. In case of the cumulative sums tests based on the ordinary or the Huberized observations this can be explained by the more difficult estimation of the long run variance, since for the tests based on the Hodges--Lehmann estimators it refers to bounded variables. The Wilcoxon change-point test is less powerful for a shift outside the center. The ordinary cumulative sums test seems to be inferior to the other tests for heavy-tailed $t_3$- or skewed $\chi_3^2$-distributed innovations and not much better for Gaussian innovations.
Additional simulations not reported here indicate that the advantage of the robust tests gets larger as the tails get heavier, see also \citet{huskova2012} and \cite{vogel2017}.


The time point $k$ where a test statistic as those considered here takes its maximum is a natural estimator of the time of a shift.
Figure \ref{fig:times} compares the resulting absolute estimation errors, relative to the length of the time series, obtained in case of the largest shift for each data generating process. Apparently, these estimators work similarly well here. Only the estimator based on the Wilcoxon statistic behaves somewhat differently as it provides more precise estimations if the shift is in the center, but less precise ones if it is far from the center.
Nevertheless, this estimator is consistent for the time of the change under conditions similar to those considered here as has been proven recently \citep{Gerstenberger2018}.

\begin{figure}[htbp!]\centering
\resizebox{8cm}{8cm}{\includegraphics{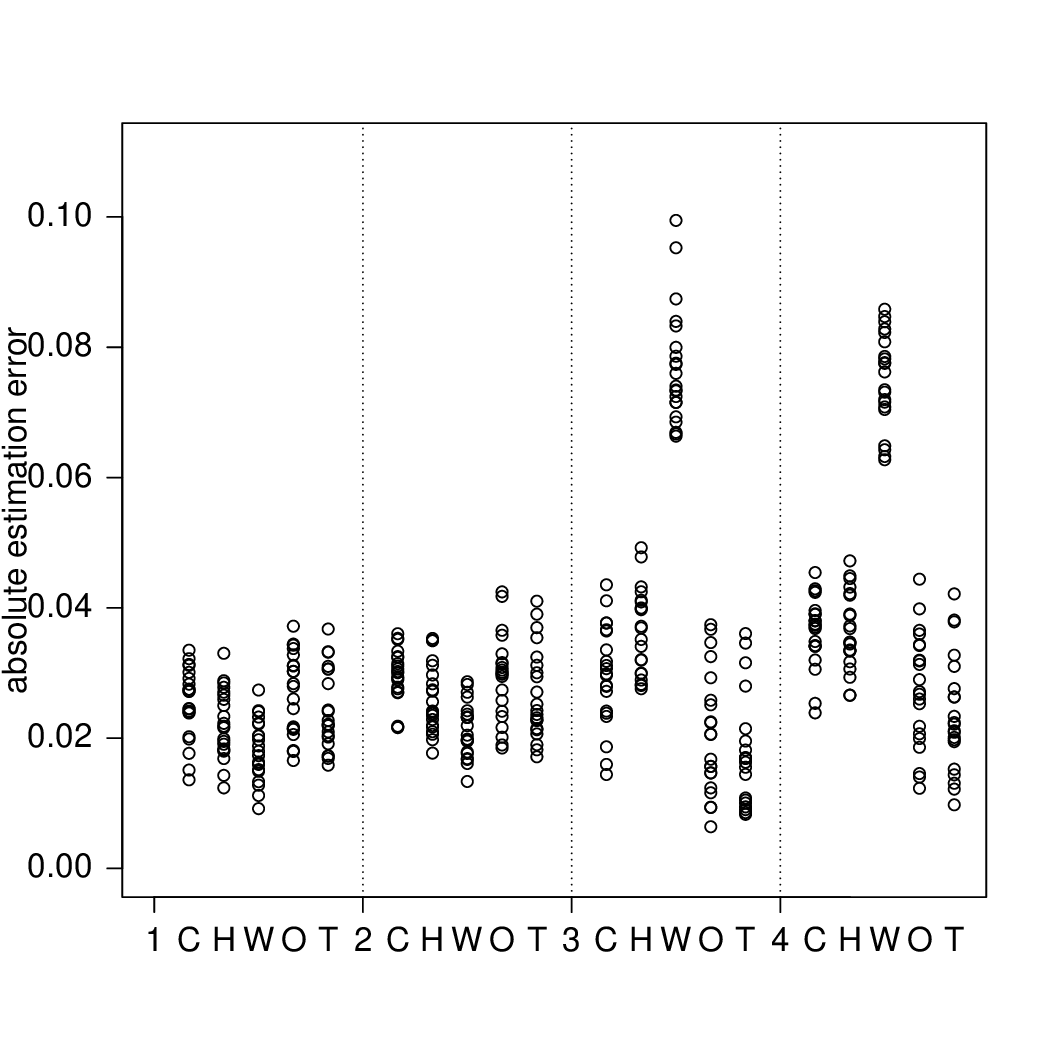}}\\
\caption{\label{fig:times}Average estimation errors of the time point of the change (relative to the length of the time series) for different time series models and a shift in the center (panels 1 and 2) or after 75\% of the observations (panels 3 and 4) in a time series of length $n=100$ (panels 1 and 3) or $n=200$ (panels 2 and 4). Cumulative sums test (C) and its variants based on Huberization (H), Wilcoxon (W), one- (O) or two-sample Hodges--Lehmann (T) statistics.     }
\end{figure}

\section{Data analysis}

For illustration we analyze the $254$ absolute daily stock returns of Volkswagen at the Frankfurt stock exchange in 2015, downloaded from yahoo finance. In September 2015 irregularities with the emissions of diesel cars of this company became public. Analyzing the stability of the level of the absolute values provides information about changes of the variability of the returns. A histogram and the sample autocorrelations of the absolute values indicate large skewness and positive dependencies with a lag-one autocorrelation of about 0$\cddot$6, but this might be due to change-points for instance in September, see Fig. \ref{fig:vw}. We perform several analyses with these data.

First we test the null hypothesis $\Hy_0^{n}:\mu_1=\ldots=\mu_n$ of a constant level of variability up to the $n$-th trading day, using the data for the first $n=30,\ldots,254$ trading days, only. All tests reject $\Hy_0^n$ at the usual nominal 0$\cddot$05 significance level if $n$ is between about 40 and 80,  possibly due to a larger variability in January than in February and March. However, some tests do not reject the null hypothesis $\Hy_0^{254}$ of a constant level throughout the year, probably due to difficulties with multiple shifts into opposite directions. All these tests are designed under the assumption of at most one change and such difficulties are well known for the cumulative sums test. As opposed to this, the two-sample Hodges--Lehmann change-point test constantly signals that the level is unstable whenever applied to $n\ge 40$ observations. This can be explained by the robustness of the underlying estimator of a level shift, which takes a large absolute value whenever there is a split of the $n$ data points such that the majority of the data before the split time point is at another level than the majority of the data after it. This applies similarly to the Huberization and, to a smaller extent, also to the Wilcoxon change-point test. The one-sample Hodges--Lehmann change-point test considered here compares the first $k$ to all $n$ observations so that the corresponding robust level estimates can be quite similar in case of a late shift.

The focus of our paper is on robust change-point tests within the at most one change scenario, but in practical applications like the one considered here several change-points can occur. A recent proposal for robust detection of multiple change-points in a sequence of independent observations is due to \citet{fearnhead2018}. A thorough solution of this problem for dependent data is outside the scope of this paper. However, we feel that simple strategies like binary segmentation can be improved by combination with a powerful robust test as proposed here. To illustrate this, we apply the tests to the whole year and estimate the date of the change-point by the value of the split $k$ for which the test statistic takes its maximum. Doing so, only our test dates a shift where we expect it to be, namely in September, while the Wilcoxon and the Huberization  test point at a change about two months earlier in July, and the other two tests do not detect a change at all. After detection of a change, the sample is split into the observations before and after the estimated change-point. Then the testing is repeated on each of these subsequences.
When applying this strategy with our Hodges--Lehmann change-point test, we sequentially detect further changes in July and at the beginnings of February and October, even when applying Bonferroni correction to achieve the same overall 0$\cddot$05 significance level in each step. The sample autocorrelations of the absolute returns within the different segments identified this way agree well with white noise assumptions.
 Among the other tests, only Huberization leads to an at least weakly significant test statistic in September in a second step when applied with Bonferroni correction, and to a third change-point in October without Bonferroni corection. The Wilcoxon test statistic detects a change in September only when being applied without Bonferroni correction.

These are just exploratory findings for a single data set, of course, and a careful examination needs further studies. Promising candidates for further improvements might be combinations of techniques like wild binary segmentation \citep{fryz2014} and the robust test statistic considered here.



\begin{figure}[htbp!]\centering
\resizebox{12cm}{8cm}{\includegraphics{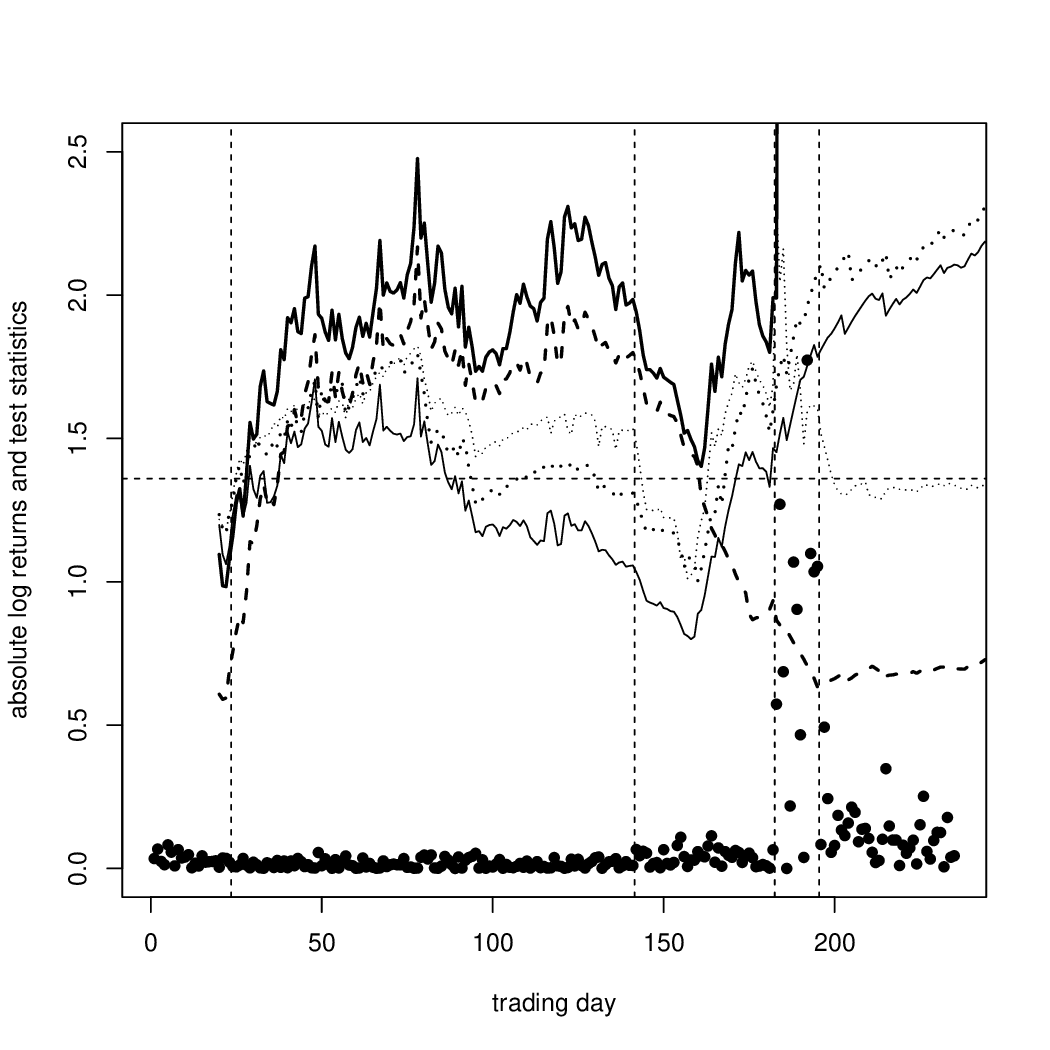}}
\caption{\label{fig:vw}Absolute daily log returns of VW in 2015 (filled dots) and change-point test statistics calculated from the first $n=20, \ldots,254$ observations:
cumulative sum test based on ordinary (thin dotted line) or Huberized observations (bold dotted line),  Wilcoxon (thin solid line), two- (bold solid line) or one-sample Hodges--Lehmann change-point test (bold dashed line). The horizontal dashed line marks the critical value at the nominal 5\% significance level. All tests reject the null hypothesis $\Hy_0^{n}:\mu_1=\ldots=\mu_n$ when being applied to the first $n\approx 50$ observations, probably due to a larger variability in January than in February and March. However, some tests do not reject the null hypothesis $\Hy_0^{254}$ of a constant level of variability throughout the year, probably due to difficulties with multiple shifts into opposite directions.
 The vertical lines mark the change-points detected by the test based on the two-sample Hodges-Lehmann estimator combined with binary segmentation.     }
\end{figure}

\section*{Acknowledgement}

This research was supported by the DFG Collaborative Research Center 823  {\em Statistical Modelling of Nonlinear Dynamic Processes}.


\newpage

\appendix

\section*{Appendix 1: auxiliary results}

The proofs require some further notations, which we introduce now. Given the kernel $h(x,y,t)$, we define the two-sample empirical $U$-process
\[
 U_{n_1,n_2}(t)=({n_1\, n_2})^{-1}\sum_{i=1}^{n_1}\sum_{j=n_1+1}^{n_1+n_2} h(X_i,X_j,t),
\]
and the two-sample empirical $U$-quantile process
\[
 Q_{n_1,n_2}(p)=U_{n_1,n_2}^{-1}(p)=\inf\{t: U_{n_1,n_2}(t)\geq p\}.
\]
Note that $U_n(\lambda,t)=U_{[n\lambda],n-[n\lambda]}(t)$ and $Q_n(\lambda,p)=Q_{[n\lambda],n-[n\lambda]}(p)$.
Moreover, we define
\[
g(x,y,t)=h(x,y,t)-h_{1}(x,t)-h_{2}(y,t)-U(t),
\]
where $h_1(x,t)$, $h_2(y,t)$, and $U(t)$ have been defined in (\ref{eq:u_t}), (\ref{eq:h_1}) and (\ref{eq:h_2}), respectively. Thus, we obtain the Hoeffding decomposition of the two-sample $U$-statistic as
\[
U_{n_1,n_2}(t)=U(t)+{n_1}^{-1}\sum_{i=1}^{n_1}h_1(X_i,t)+{n_2}^{-1}\sum_{j=n_1+1}^{n_1+n_2}h_2(X_j,t)+({n_1n_2})^{-1}\sum_{i=1}^{n_1}\sum_{j=n_1+1}^{n_1+n_2}g(X_i,X_j,t).
\]
The next two lemmas will deal with the last sum, which is called degenerate part:
\begin{lemma}\label{lem1} Under the assumptions (C1) and (C3), there exists a constant $C$, such that for any integers $0\leq m_1\leq n_1\leq m_2\leq n_2$
\begin{equation}
E\left\{\sum_{i=m_1+1}^{n_1}\sum_{j=m_2+1}^{n_2}g(X_i,X_j,t)\right\}^2\leq C(n_1-m_1)(n_2-m_2),\label{eq:secmoments}
\end{equation}
for all $t$ in the neighborhood referred to in assumption (C3).
\end{lemma}
For the special case $m_2=n_1$, this is Proposition 6.2 of \citet{dehling2012}. The general case can be proved with the same arguments; we omit the details.
\newpage

\begin{lemma}\label{lem2} Suppose that the assumptions (C1) and (C3) hold.
\\[1mm]
(i) There is a constant $C$, such that for all $t$
\[
E\left\{\max_{0\leq m_1\leq n_1\leq m_2\leq n_2\leq 2^l}\Big|\sum_{i=m_1+1}^{n_1}\sum_{j=m_2+1}^{n_2}g(X_i,X_j,t)\Big|\right\}^2\leq C2^{2l}l^4.
\]
(ii) As $n\rightarrow \infty$, we have
\[
\sup_{0\leq \lambda \leq 1}\Bigg|\sum_{i=1}^{\lfloor \lambda n\rfloor}\sum_{j=\lfloor\lambda n\rfloor +1}^{n}g(X_i,X_j,t)\Bigg|=o(n\log^3 n),
\]
almost surely.
\end{lemma}

\begin{proof} To prove the first part of the lemma, we introduce the notation
\[
Q_{m_1,n_1,m_2,n_2}=\sum_{i=m_1+1}^{n_1}\sum_{j=m_2+1}^{n_2}g(X_i,X_j,t),
\]
for $m_1\leq n_1\leq m_2\leq n_2$, and $Q_{m_1,n_1,m_2,n_2}=0$ otherwise. These
quantities satisfy an addition rule
\[
Q_{m_1,n_1,m_2,n_2}+Q_{n_1,n'_1,m_2,n_2}=Q_{m_1,n'_1,m_2,n_2}\\
Q_{m_1,n_1,m_2,n_2}+Q_{m_1,n_1,n_2,n'_2}=Q_{m_1,n_1,m_2,n'_2}
\]
Note that
\[
\max_{0\leq m_1\leq n_1\leq m_2\leq n_2\leq 2^l}|Q_{m_1,n_1,m_2,n_2}|\leq2\max_{0\leq m_1\leq n_1\leq m_2\leq 2^l}|Q_{m_1,n_1,m_2,2^l}|
\leq 4\max_{0\leq n_1\leq m_2\leq 2^l}|Q_{0,n_1,m_2,2^l}|
\]
Now we use a chaining technique. For example,
\[
|Q_{0,5,7,16}|\leq|Q_{0,4,7,8}|+|Q_{0,4,8,16}|+|Q_{4,5,7,8}|+|Q_{4,5,8,16}|.
\]
We conclude that
\[
\max_{0\leq m_1\leq n_1\leq m_2\leq n_2\leq 2^l}|Q_{m_1,n_1,m_2,n_2}|\leq4\, \sum_{d_1=0}^l\sum_{d_2=0}^l\max_{\substack{i=1,\ldots,2^{l-d_1}\\j=1,\ldots,2^{l-d_2}}}|Q_{(i-1)2^{d_1},i2^{d_1},(j-1)2^{d_2},j2^{d_2}}|.
\]
Note that for any random variables $Y_1,\ldots,Y_k$ we have that $E\left(\max_{i=1,\ldots,k}Y_k\right)^2\leq\sum_{i=1}^kEY_i^2$. Using this inequality and (\ref{eq:secmoments}), we conclude that
\begin{eqnarray*}
&& \hspace{-20mm}
E\left(\max_{0\leq m_1\leq n_1\leq m_2\leq n_2\leq 2^l}|Q_{m_1,n_1,m_2,n_2}|\right)^2\\
&&  \leq 16E\Bigg(\sum_{d_1=0}^l\sum_{d_2=0}^l\max_{\substack{i=1,\ldots,2^{l-d_1}\\j=1,\ldots,2^{l-d_2}}}|Q_{(i-1)2^{d_1},i2^{d_1},(j-1)2^{d_2},j2^{d_2}}|\Bigg)^2 \\
&& \leq 16l^2\sum_{d_1=0}^l\sum_{d_2=0}^lE\Bigg(\max_{\substack{i=1,\ldots,2^{l-d_1}\\j=1,\ldots,2^{l-d_2}}}|Q_{(i-1)2^{d_1},i2^{d_1},(j-1)2^{d_2},j2^{d_2}}|\Bigg)^2\\
&& \leq 16l^2\sum_{d_1=0}^l\sum_{d_2=0}^l\sum_{i=1}^{2^{l-d_1}}\sum_{j=1}^{2^{l-d_2}}E\left(Q_{(i-1)2^{d_1},i2^{d_1},(j-1)2^{d_2},j2^{d_2}}\right)^2\\
&& \leq Cl^2\sum_{d_1=0}^l\sum_{d_2=0}^l\sum_{i=1}^{2^{l-d_1}}\sum_{j=1}^{2^{l-d_2}}2^{d_1}2^{d_2}\leq Cl^42^{2l}.
\end{eqnarray*}
So the first part of the lemma is proven. For the second part, it suffices to show that
\[
\max_{0\leq m_1\leq n_1\leq m_2\leq n_2\leq 2^l}|Q_{m_1,n_1,m_2,n_2}|=o(2^{l}l^3).
\]
Now by the Chebyshev inequality, we obtain
\begin{multline*}
\sum_{l=1}^\infty pr\left({2^{-l}l^{-3}}\max_{0\leq m_1\leq n_1\leq m_2\leq n_2\leq 2^l}|Q_{m_1,n_1,m_2,n_2}|\geq\epsilon\right)\\
\leq {\epsilon^{-2}}\sum_{l=1}^\infty {2^{-2l}l^{-6}}E\left(\max_{0\leq m_1\leq n_1\leq m_2\leq n_2\leq 2^l}|Q_{m_1,n_1,m_2,n_2}|\right)^2\leq C\sum_{l=1}^\infty{l^{-2}}<\infty.
\end{multline*}
The Borel-Cantelli lemma completes the proof.
\end{proof}

In order to prove Theorem \ref{th:bahadur}, we need some information about the local behaviour of the empirical $U$-process. We will first concentrate on the first half of the process, that is $\lambda\in[0,1/2]$:

\begin{lemma}\label{lem3}  Under the assumptions (C1), (C2), and  (C3),
\[
\sup_{\substack{\lambda\in[0,1/2]\\|t-t_p|\leq C\left[\frac{\log\log\{\min(\lambda,1-\lambda)\}n}{\min(\lambda,1-\lambda)n}\right]^{1/2}}}\!\!\!\! \lambda(1-\lambda)\left|\left\{U_{\lfloor\lambda n \rfloor,n-\lfloor\lambda n\rfloor}(t)-U(t)\right\}-\left\{U_{\lfloor\lambda n\rfloor,n-\lfloor\lambda n\rfloor}(t_p)-p\right\}\right|
=O\left(n^{-\frac{5}{9}}\right)
\]
almost surely.
\end{lemma}

\begin{proof} We define $n_1=\lfloor n\lambda \rfloor$, $n_2=n-\lfloor n\lambda \rfloor$ and $r_n=({\log\log n_1}/{n_1})^{1/2}$, and note that $n_1+n_2=n$. We define the sequences $c_{2^l}=2^{-5l/9}$, and for $n=2^{l-1}+1,\ldots,2^{l}$ we set $c_n=c_{2^l}$. By the monotonicity of $U_{n_1,n_2}$ and $U$ in $t$, we have that
\begin{eqnarray*}
&&\hspace{-5mm} \sup_{\substack{n_1\leq {n}/{2}\\|t-t_p|\leq Cr_n}}\!\!\!\!\frac{n_1n_2}{n^2}\Big|\{U_{n_1,n_2}(t)-U(t)\}-\{U_{n_1,n_2}(t_p)-p\}\Big|\\[1mm]
&&\leq\!\!\!\!\!\!\!\max_{\substack{n_1\leq {n}/{2}\\t\in c_n\Z\\|t-t_p|\leq Cr_n}} \!\!\!\!\!\!\! \frac{n_1n_2}{n^2}\Big|\{U_{n_1,n_2}(t)-U(t)\}-\{U_{n_1,n_2}(t_p)-p\}\Big|
+\!\!\!\!\!\!\max_{\substack{t\in c_n\Z\\|t-t_p|\leq Cr_n}}|U(t)-U(t+c_n)|.
\end{eqnarray*}
As $U$ is differentiable in $t_p$, we get that the second summand is of the order $O(c_n)$. For the first summand, we use the Hoeffding decomposition and get
\begin{eqnarray}
&&\hspace{-10mm} \max_{\substack{n_1\leq\frac{n}{2}\\t\in c_n\Z\\|t-t_p|\leq Cr_n}} \frac{n_1n_2}{n^2}\Big|\{U_{n_1,n_2}(t)-U(t)\}-\{U_{n_1,n_2}(t_p)-p\}\Big|\label{eq:uproc-osc}\\
&\leq& \max_{\substack{n_1\leq\frac{n}{2}\\t\in c_n\Z\\|t-t_p|\leq Cr_n}}\Big|\frac{1}{n}\sum_{i=1}^{n_1}h_1(X_i,t)-\frac{1}{n}\sum_{i=1}^{n_1}h_1(X_i,t_p)\Big|\nonumber\\
&& +\max_{\substack{n_1\leq\frac{n}{2}\\t\in c_n\Z\\|t-t_p|\leq Cr_n}}\frac{n_1}{n}\Big|\frac{1}{n}\sum_{j=n_1+1}^{n}h_2(X_j,t)-\frac{1}{n}\sum_{j=n_1+1}^{n}h_2(X_j,t_p)\Big|\nonumber\\
&&+\max_{\substack{n_1\leq\frac{n}{2}\\t\in c_n\Z\\|t-t_p|\leq Cr_n}}\Big|\frac{1}{n^2}\sum_{i=1}^{n_1}\sum_{j=n_1+1}^{n_1+n_2}g(X_i,X_j,t)\Big|
+\max_{n_1\leq\frac{n}{2}}\Big|\frac{1}{n^2}\sum_{i=1}^{n_1}\sum_{j=n_1+1}^{n_1+n_2}g(X_i,X_j,t_p)\Big|
\nonumber
\end{eqnarray}
For the first summand, we refer to (13) in Theorem 1 of \citet{wendler2011} and conclude that it is of size $o\{n^{-(5+\gamma)/{8}}(\log n)^{3/4}(\log\log n)^{{1}/{2}}\}=O(n^{-{5}/{9}})$ almost surely for a $\gamma>0$. Note that the continuity condition on the kernel in \cite{wendler2011} is different, but the continuity is only needed to guarantee that $\{h_1(X_i)\}_{i\in\N}$ is near epoch dependent. This also holds under our continuity condition by Proposition 2.11 from \citet{borovkova2001}.

We split the second summand into two parts, so that for the first part $n_1/n$ is small and for the second part $r_n=({{\log\log n_1}/{n_1}})^{1/2}$:
\begin{eqnarray*}
&&\hspace{-10mm}
\max_{\substack{n_1\leq {n}/{2}\\t\in c_n\Z\\|t-t_p|\leq Cr_n}}{n_1}n^{-1}\left|{n}^{-1}\sum_{j=n_1+1}^{n}h_2(X_j,t)-{n}^{-1}\sum_{j=n_1+1}^{n}h_2(X_j,t_p)\right|\\
&\leq& \max_{\substack{n_1\leq n^{{4}/{9}}\\t\in c_n\Z\\|t-t_p|\leq Cr_n}}n^{-5/9}\left|{n}^{-1}\sum_{j=n_1+1}^{n}h_2(X_j,t)-{n}^{-1}\sum_{j=n_1+1}^{n}h_2(X_j,t_p)\right|\\
&& +\max_{\substack{n_1\leq {n}/{2}\\t\in c_n\Z\\|t-t_p|\leq C\left({{\log\log n^{4/9}}/{n^{4/9}}}\right)^{1/2}}}\left|{n}^{-1}\sum_{j=n_1+1}^{n}h_2(X_j,t)-{n}^{-1}\sum_{j=n_1+1}^{n}h_2(X_j,t_p)\right|
\\
&=:&A_1+A_2.
\end{eqnarray*}
As $h$ is bounded and therefore $h_2$ is bounded, we have that $A_1=O(n^{-{5}/{9}})$. Along the lines of the proof of  Theorem 1 in \cite{wendler2011}, we obtain
\[
A_2=O\{n^{-{2}({1+\gamma})/{36}}n^{-{1}/{2}}(\log n)^{3/4}(\log\log n)^{{1/2}}\}=O(n^{-{5}/{9}})
\] 
almost surely. For the third summand on the r.h.s. of (\ref{eq:uproc-osc}), we use the first part of Lemma A1, the Chebyshev inequality, and the fact that the second moment of the maximum of random variables is smaller or equal to the sum of second moments. We obtain
\begin{eqnarray*}
&& \sum_{l=1}^\infty pr\Bigg\{{c_{2^l}^{-1}}\max_{2^{l-1}\leq n\leq 2^l}\max_{\substack{n_1\leq {n}/{2}\\t\in c_n\Z\\|t-t_p|\leq Cr_n}}\left|{n^{-2}}\sum_{i=1}^{n_1}\sum_{j=n_1+1}^{n_1+n_2}g(X_i,X_j,t )\right|\geq\epsilon\Bigg\}\\
&&\leq\sum_{l=1}^\infty{c^{-2}_{2^l}\epsilon^{-2} 2^{-4(l-1)}}
E\Bigg\{\max_{2^{l-1}\leq n\leq 2^l}\max_{\substack{n_1\leq {n}/{2}\\t\in c_n\Z\\|t-t_p|\leq Cr_n}}\left|\sum_{i=1}^{n_1}\sum_{j=n_1+1}^{n_1+n_2}g(X_i,X_j,t)\right|\Bigg\}^2\\
&&\leq\sum_{l=1}^\infty\sum_{\substack{t\in c_{2^l}\Z\\|t-t_p|\leq C}}{c^{-2}_{2^l}\epsilon^{-2} 2^{-4(l-1)}} E\left\{\max_{0\leq n_1\leq n_1+ n_2\leq 2^{l}}\left|\sum_{i=1}^{n_1}\sum_{j=n_1+1}^{n_1+n_2}g(X_i,X_j,t)\right|\right\}^2\\
&&\leq 2^5C\sum_{l=1}^\infty {c^{-3}_{2^{l}}2^{-4l}}2^{2l}l^4\leq 2^5C\sum_{l=1}^\infty {l^4}{2^{-{l}/{3}}}<\infty,
\end{eqnarray*}
as the set $\{t\in c_{2^l}\Z,\ |t-t_p|\leq C\}$ has at most $2C\, c_{2^l}^{-1}$ elements. Using the Borel-Cantelli lemma, we conclude that the third summand on the r.h.s. of (\ref{eq:uproc-osc}) is of size $o(c_n)$ almost surely. The last summand can be treated in the same way and so in total we have proved the order $O(n^{-{5}/{9}})$ almost surely.
\end{proof}

\begin{lemma}\label{lem4} Under the assumptions (C1) and (C3),
\[
\sup_{n_2>n_1}\left|U_{n_1,n_2}(t_p)-p\right|=O\left\{\left({{\log\log n_1}/{n_1}}\right)^{1/2}\right\}
\]
almost surely.
\end{lemma}

\begin{proof} We use the Hoeffding decomposition
\begin{multline*}
\sup_{n_2>n_1}\left|U_{n_1,n_2}(t_p)-p\right|\leq \left|{n_1}^{-1}\sum_{i=1}^{n_1}h_1(X_i,t_p)\right|+\left|{n_2}^{-1}\sum_{i=1}^{n_1}h_2(X_i,t_p)\right|\\
+\sup_{n_2>n_1}{n_2}^{-1}\left|\sum_{j=1}^{n_1+n_2}h_2(X_j,t_p)\right|+\sup_{n_2>n_1}({n_1n_2})^{-1}\left|\sum_{i=1}^{n_1}\sum_{j=n_1+1}^{n_2}g(X_i,X_j,t_p)\right|.
\end{multline*}
For the first two summands, we use Proposition 3.7 of \cite{wendler2011}, which leads to
\[
\left|{n_1}^{-1}\sum_{i=1}^{n_1}h_k(X_i,t_p)\right|=O\left\{\left({{\log\log n_1}/{n_1}}\right)^{1/2}\right\}
\]
for $k=1,2$ almost surely. Furthermore
\[
\sup_{n_2>n_1}{n_2}^{-1}\left|\sum_{j=1}^{n_1+n_2}h_2(X_j,t_p)\right|\leq \sup_{n_2>n_1}2({n_1+n_2})^{-1}\left|\sum_{j=1}^{n_1+n_2}h_2(X_j,t_p)\right|=O\left\{\left({{\log\log n_1}/{n_1}}\right)^{1/2}\right\}.
\]
For the last summand, we use Lemma A1 to obtain
\[
E\left(\max_{\substack{0\leq m_1\leq n_1\leq 2^{l_1}\\ n_1\leq m_2\leq n_2\leq 2^{l_2}}}|Q_{m_1,n_1,m_2,n_2}|\right)^2\leq Cl_1^2l_2^22^{l_1}2^{l_2}.
\]
Now by the Chebyshev inequality, we obtain
\begin{multline*}
\sum_{l_2=1}^\infty\sum_{l_1=1}^{l_2} pr\left\{\left({2^{l_1}\log l_1}\right)^{-1/2}{2^{-l_2}}\max_{\substack{0\leq m_1\leq n_1\leq 2^{l_1}\\ n_1\leq m_2\leq n_2\leq 2^{l_2}}}|Q_{m_1,n_1,m_2,n_2}|\geq\epsilon\right\}\\
\leq \epsilon^{-2}\sum_{l_2=1}^\infty\sum_{l_1=1}^{l_2}\left({2^{l_1}\log l_12^{2l_2}}\right)^{-1/2}E\left(\max_{\substack{0\leq m_1\leq n_1\leq 2^{l_1}\\ n_1\leq m_2\leq n_2\leq 2^{l_2}}}|Q_{m_1,n_1,m_2,n_2}|\right)^2\\
\leq C\sum_{l_2=1}^\infty\sum_{l_1=1}^{l_2}{l_1^2l_2^2}/({\log l_12^{l_2}})^{-1}<\infty,
\end{multline*}
so we can conclude that the last summand is of the required  order almost surely.\end{proof}

\begin{proposition}\label{prop1} Under the assumptions (C1), (C2) and (C3), the process
\[
{n}^{1/2}\left\{\lambda(1-\lambda)(U_{\lfloor \lambda n \rfloor,n- \lfloor \lambda n \rfloor}(t_p)-p\right\}_{\lambda\in[0,1]}
\]
converges weakly to
\[
\left[(1-\lambda) W_1(\lambda)+\lambda\{W_2(1)-W_2(\lambda)\}\right]_{\lambda\in[0,1]},
\]
where $W=(W_1,W_2)$ is a two-dimensional Brownian motion with covariance structure
\[
 \Cov\{W_i(\mu),W_j(\lambda)\}=(\mu\wedge \lambda)
 \sum_{k\in \Z} E\left[h_{i}\{X_0; Q(p)\}, h_{j}\{X_k;Q(p)\}\right].
\]
\end{proposition}

This is Theorem 2.4 of \citet{dehling2015}.

\begin{lemma}\label{lem5} Under the assumptions (C1), (C2) and (C3) for any bandwidth $b=b_n$ with $b+b^{-3}n^{-1}=o(1)$
we have the following convergence in probability:
\[
\hat{u}(0)=\frac{2}{n(n-1)b}\sum_{1\leq i<j\leq n}K\left(\frac{X_i-X_j}{b}\right)\rightarrow u(0)
\]
\end{lemma}

\begin{proof} First note that $\hat{u}$ is a one-sample $U$-statistic with symmetric kernel $k_n(x,y)={b}^{-1}K\{({x-y})/{b}\}$ depending on $n$. We use the Hoeffding decomposition
\begin{eqnarray*}
\tilde{u}_n&=&Ek_n(X,Y),\\
k_{1,n}(x)&=&Ek_n(x,X)-\tilde{u}_n,\\
k_{2,n}(x,y)&=&k_n(x,y)-k_{1,n}(x)-k_{1,n}(y)-\tilde{u}_n,
\end{eqnarray*}
where $X$, $Y$ are independent with the same distribution as $X_1$. We obtain
\[
\hat{u}(0 )=\tilde{u}_n+{2}{n}^{-1}\sum_{i=1}^nk_{1,n}(X_i)+2\{{n(n-1)}\}^{-1}\sum_{1\leq i<j\leq n}k_{2,n}(X_i,Y_i).
\]
By our assumptions, $K$ has a bounded support, so let $K(x)=0$ for $|x|>M$. Because the density $u$ is continuous and $K$ integrates to 1, we can conclude that for $n\to\infty$ we have
\[
|\tilde{u}_n-u(0)|=|\int {b}^{-1}K({x}/{b})u(x)dx-u(0)|\leq\sup_{|x|\leq Mb}\left|u(x)-u(0)\right|\rightarrow 0,
\]
since $b_n\rightarrow 0$. As $K$ is Lipschitz continuous, $|K(x)-K(y)|\leq L_1|x-y|$ for some constant $L_1$, we have that $k_{1,n}(x)$ is Lipschitz continuous with constant $L_1/b^2$. By Proposition 2.11 of \cite{borovkova2001}, it follows that $\left\{k_{1,n}(X_i)\right\}_{i\in\N}$ is near epoch dependent with approximation constants $a'_k=3(a_k)^{1/2}/b_n^2$. Let $C_1=C/b$ be the upper bound of $k_{1,n}(X_i)$, then by Lemma 2.18 of \citet{borovkova2001}
\[
\left|E\left\{k_{1,n}(X_i)k_{1,n}(X_{j})\right\}\right|\leq 4C_1a'_{|i-j|/3}+2C^2_1\beta(|i-j|/3)\leq C{b^{-3}}\{{a_{|i-j|/3}^{1/2}}+\beta(|i-j|/3)\},
\]
so we obtain by stationarity that
\begin{eqnarray*}
E\Big\{{2}{n}^{-1}\sum_{i=1}^nk_{1,n}(X_i)\Big\}^2
&\leq&{4}{n}^{-1}\sum_{i=1}^\infty\left|E\left\{ k_{1,n}(X_1)k_{1,n}(X_i)\right\}\right|\\
&\leq& C{n^{-1}b^{-3}}\sum_{i=1}^\infty\{({a_{|i-j|/3}})^{1/2}+\beta(|i-j|/3)\}\rightarrow 0,
\end{eqnarray*}
because $nb^3\rightarrow\infty$, so the second summand converges to 0. For the third summand, we use Lemma 4.3 of \citet{borovkova2001} and the fact that $k_{2,n}(x,y)$ is a degenerate kernel bounded by $4C_1/b$ and that the product $k_{2,n}(x_1,x_2)k_{2,n}(x_3,x_4)$ is $1$-Lipschitz with constant $4\{4({C_1}/{b})({L_1}/{b^2})\}=Cb^{-3}$. We get the inequality
\[
\left|E\left\{k_{2,n}(X_{i_1},X_{i_2})k_{2,n}(X_{i_3},X_{i_4})\right\}\right|\leq {C}\left\{A_{m/3}+\beta(m/3)\right\}/b^{2}+CA_{m/3}/b^{3}
\]
with $A_i=\left({2\sum_{n=i}^\infty a_n}\right)^{1/2}$ and $m=\max\left\{i_{\left(2\right)}-i_{\left(1\right)},i_{\left(4\right)}-i_{\left(3\right)}\right\}$, where   $i_{(1)}\leq i_{(2)}\leq i_{(3)}\leq i_{(4)}$
are the order statistics of the indices $i_1,i_2,i_3,i_4$. Thus, we obtain
\begin{multline*}
E\Big\{{2}{n^{-1}(n-1)^{-1}}\sum_{1\leq i<j\leq n}k_{2,n}(X_i,Y_i)\Big\}^2\leq
C{n^{-4}}\sum_{i_{1}<i_{2},i_{3}<i_{4}}^{n}\left|E\left\{k_{2,n}(X_{i_1},X_{i_2})k_{2,n}(X_{i_3},X_{i_4})\right\}\right|\\
=C{n^{-4}}\sum_{m=0}^{n}\sum_{\substack{i_{1}<i_{2},i_{3}<i_{4}\\ \max\{i_{(2)}-i_{(1)},i_{(4)}-i_{(3)}\}=m}}\left|E\left\{k_{2,n}(X_{i_1},X_{i_2})k_{2,n}(X_{i_3},X_{i_4})\right\}\right|\\
\leq C{n^{-4}b^{-3}}\sum_{m=0}^{n}\sum_{\substack{i_{1}<i_{2},i_{3}<i_{4}\\ \max\{i_{(2)}-i_{(1)},i_{(4)}-i_{(3)}\}=m}}\Big\{A_{{m}/{3}}+\beta({m}/{3})\Big\}.
\end{multline*}
At this point, we have to calculate the number of quadruples $(i_{1},i_{2},i_{3},i_{4})$ such that $\max\{i_{(2)}-i_{(1)},i_{(4)}-i_{(3)}\}=m$. First note that there are at most 6 quadruples which lead to the same ordered numbers $i_{(1)},i_{(2)}, i_{(3)},i_{(4)}$. There are at most $n^2$ possibilities to choose $i_{(1)}$ and $i_{(4)}$. If $i_{(2)}-i_{(1)}=\max\{i_{(2)}-i_{(1)},i_{(4)}-i_{(3)}\}=m$, then $i_{(2)}$ is already fixed and there are $m$ possibilities for $i_{(3)}$. The same argument applies if $i_{(4)}-i_{(3)}=\max\{i_{(2)}-i_{(1)},i_{(4)}-i_{(3)}\}=m$, so we finally obtain
\[
E\Big\{{2}{n^{-1}(n-1)^{-1}}\sum_{1\leq i<j\leq n}k_{2,n}(X_i,Y_i)\Big\}^2
\leq C{n^{-2}b^{-3}}\sum_{m=0}^{n}m\Big\{A_{{m}/{3}}+\beta({m}/{3})\Big\}\rightarrow 0,
\]
as the $mA_{{m}/{3}}$ and $m\beta({m}/{3})$ are summable by assumption (C1), and $n^2b^3\rightarrow\infty$.
\end{proof}

\begin{lemma}\label{lemma:unifu0}
If $nb^4\rightarrow \infty$, then we have the following convergence in probability under $\Hy_0$:
\[
 \max_k|\hat{u}_{k,n}(0) - \hat{u}(0)| \rightarrow 0
\]
\end{lemma}

\begin{proof}
Define the estimate $\hat\Delta_k=\med\{(X_j-X_i):1\le i\le k<j\le n\}$ of the height of a possible level shift at time $k$ and the corrected data
\[
X_i^{(k)}:=\left\{
 \begin{array}{ll}
  X_i, & i\leq k \\[2mm]
  X_i-\hat\Delta_k, & i \geq k+1.
 \end{array}
\right.
\]
The density estimator based on the corrected data is then given by
\[
  \hat{u}_{k,n}(0)= \frac{2}{n(n-1) b} \sum_{1\leq i<j\leq n} K\left( \frac{X_i^{(k)}-X_j^{(k)}}{b}  \right).
\]
We analyze this density estimator by comparing it with the density estimator based on the original data, given by
\[
  \hat{u}(0)= \frac{2}{n(n-1) b} \sum_{1\leq i<j\leq n} K\left( \frac{X_i-X_j}{b}  \right).
\]

Observe that  for $1\leq i<j\leq n$, we have $X_i^{(k)}-X_j^{(k)}=X_i-X_j$, unless $1\leq i \leq k< j\leq n$. Thus we obtain
\[
  \hat{u}_{k,n}(0) -\hat{u}(0) =\frac{2}{n(n-1)b} \sum_{i=1}^k
  \sum_{j=k+1}^n  \left\{ K\left( \frac{X_i^{(k)}-X_j^{(k)}}{b}  \right) - K\left( \frac{X_i-X_j}{b}  \right)   \right\}.
\]
Denoting the Lipschitz constant of $K$ by $L$, we get
\[
  \max_{k} |\hat{u}_{k,n}(0) -\hat{u}(0)| \leq \max_k\frac{2(n-k)k }{n(n-1)b} L
    \frac{\left|\hat\Delta_k\right|}{b}  ={{n}^{-1/2}b^{-2}}\max_k\frac{2L{n}^{1/2} k(n-k)\left|\hat\Delta_k\right|}{n(n-1)}\to 0,
\]
if ${n}^{1/2}b^{2}\to\infty$, since the second ratio converges to the supremum of a Brownian bridge according to Theorem \ref{th:HL-process}.
\end{proof}

\begin{lemma}\label{lem6} Let $G$ be a non-decreasing function, $c,l>0$ constants and $[C_1,C_2]\subset\R$. If for all $t,t'\in[C_1,C_2]$ with $|t-t'|\leq l+2c$
\[
|G(t)-G(t')-(t-t')|\leq c,
\]
then for all $p,p'\in\R$ with $|p-p'|\leq l$ and $G^{-1}(p),G^{-1}(p')\in(C_1+2c+l,C_2-2c-l)$
\[
|G^{-1}(p)-G^{-1}(p')-(p-p')|\leq c
\]
where $G^{-1}(p)=\inf\left\{t\big|G(t)\geq p\right\}$ denotes the generalized inverse.
\end{lemma}

\begin{proof}
This is Lemma 3.5 of \cite{wendler2012}.
\end{proof}
\newpage

\section*{Appendix 2: proofs of the main theorems}

{\em Proof of Theorem \ref{th:bahadur}.} Without loss of generality, we can assume that $u(t_p)=1$, otherwise replacing $h(x,y,t)$ by $h\{x,y,{t}/{u(t_p)}\}$. We will first concentrate on the first half, that means we will investigate
\begin{multline*}
\sup_{\lambda\in[0,\frac{1}{2}]}\lambda(1-\lambda)\left|U^{-1}_{\lfloor \lambda n \rfloor,n-\lfloor\lambda n \rfloor}(p)-t_p+U_{\lfloor \lambda n \rfloor,n-\lfloor\lambda n \rfloor}(t_p)-p\right|\\
\leq \sup_{\lambda\in[0,\frac{1}{2}]}\lambda(1-\lambda)\left|U^{-1}_{\lfloor \lambda n \rfloor,n-\lfloor\lambda n \rfloor}(p)-U^{-1}_{\lfloor \lambda n \rfloor,n-\lfloor\lambda n \rfloor}\left\{U_{\lfloor \lambda n \rfloor,n-\lfloor\lambda n \rfloor}(t_p)\right\}+U_{\lfloor \lambda n \rfloor,n-\lfloor\lambda n \rfloor}(t_p)-p\right|\\
+\sup_{\lambda\in[0,\frac{1}{2}]}\lambda(1-\lambda)\left|U^{-1}_{\lfloor \lambda n \rfloor,n-\lfloor\lambda n \rfloor}\left\{U_{\lfloor \lambda n \rfloor,n-\lfloor\lambda n \rfloor}(t_p)\right\}-t_p\right|.
\end{multline*}
Define $r_n= \left\{{{\log\log (\lambda n)/(\lambda n)}}\right\}^{1/2}$. By Lemma A\ref{lem4}, we can choose $C_1>0$, such that for all n
\[
pr\left[\sup_{\lambda\in[0,\frac{1}{2}]}\left|U_{\lfloor \lambda n \rfloor,n-\lfloor\lambda n \rfloor}(t_p)-p\right|/r_n\geq C_1 \right]\leq \epsilon.
\]
Hence, using Lemma A\ref{lem3} and A\ref{lem6}, there exists a constant $C_2$ such that
\begin{eqnarray*}
&& \hspace{-8mm}pr\Big[\! \sup_{\lambda\in[0,\frac{1}{2}]}\! \lambda(1-\lambda)
 \big|U^{-1}_{\lfloor \lambda n \rfloor,n-\lfloor\lambda n \rfloor}(p)-U^{-1}_{\lfloor \lambda n \rfloor,n-\lfloor\lambda n \rfloor}\{U_{\lfloor \lambda n \rfloor,n-\lfloor\lambda n \rfloor}(t_p)\}+U_{\lfloor \lambda n \rfloor,n-\lfloor\lambda n \rfloor}(t_p)-p\big|>C_2n^{-5/9}\Big]\\
&& \leq pr\Bigg[\sup_{\substack{\lambda\in[0,\frac{1}{2}]\\ |p-p'|\leq C_1 r_n}}\lambda(1-\lambda)\left|U^{-1}_{\lfloor \lambda n \rfloor,n-\lfloor\lambda n \rfloor}(p)-U^{-1}_{\lfloor \lambda n \rfloor,n-\lfloor\lambda n \rfloor}(p')+p'-p\right|>C_2n^{-5/9}\Bigg]\\
&&\qquad +pr\Bigg[\sup_{\lambda\in[0,\frac{1}{2}]}\left|U_{\lfloor \lambda n \rfloor,n-\lfloor\lambda n \rfloor}(t_p)-p\right|/r_n\geq C_1 \Bigg]\\
&& \leq pr\Bigg(\!\!\sup_{\substack{\lambda\in[0,\frac{1}{2}]\\ |t-t_p|\leq C_1 r_n}}\lambda(1-\lambda)\left|U_{\lfloor \lambda n \rfloor,n-\lfloor\lambda n \rfloor}(t)-U(t)-U_{\lfloor \lambda n \rfloor,n-\lfloor\lambda n \rfloor}(t_p)+p\right|>C_2n^{-5/9}\Bigg)+\epsilon\\
&& \leq 2\epsilon.
\end{eqnarray*}
Thus, the first summand is of order $n^{-5/9}$. It remains to show the convergence of the second summand $U_{\lfloor \lambda n \rfloor,n-\lfloor\lambda n \rfloor}^{-1}\{U_{\lfloor \lambda n \rfloor,n-\lfloor\lambda n \rfloor}(t_p)\}-t_p$. By the definition of the generalized inverse,
$$U_{\lfloor \lambda n \rfloor,n-\lfloor\lambda n \rfloor}^{-1}\{U_{\lfloor \lambda n \rfloor,n-\lfloor\lambda n \rfloor}(t_p)\}-t_p\leq 0.$$ Furthermore, if $U_{n_1,n_2}(t)<U_{n_1,n_2}(t_p)$ by the monotonicity of $h$, we have for all $n_2'\geq n_2$ that $U_{n_1,n'_2}(t)<U_{n_1,n'_2}(t_p)$. As $U_{n_1,n_2}^{-1}\{U_{n_1,n_2}(t_p)\}$ is the supremum of all $t$ such that $U_{n_1,n_2}(t)<U_{n_1,n_2}(t_p)$, it follows that $U_{n_1,n_2}^{-1}\{
U_{n_1,n_2}(t_p)\}$ is nondecreasing in $n_2$.

For every $c>0$, the inequality $U_{n_1,n_1}^{-1}\{U_{n_1,n_1}(t_p)\}-t_p<-c$ implies that $U_{n_1,n_1}(t_p-c)- U_{n_1,n_1}(t_p)\geq0$, which is equivalent to
\begin{equation*}
U_{n_1,n_1}(t_p-c)- U_{n_1,n_1}(t_p)-U(t_p-c)+p\geq- U(t_p-c)+p.
\end{equation*}
By Lemma A\ref{lem3}, there is a constant $C_3$ such that
\begin{equation*}
pr\Bigg(\sup_{n_1\in\N}n_1^{{5}/{9}}\sup_{|t-t_p|\leq r_n}\left|U_{n_1,n_1}(t)-U(t)-\{U_{n_1,n_1}(t_p)-p\}\right|> C_3\Bigg)<\epsilon.
\end{equation*}
As $U$ is differentiable, we have that $U(t_p-C_4n_1^{-{5}/{9}})+p>C_3n_1^{-{5}/{9}}$ for some constant $C_4$ and consequently for all $n_2\geq n_1$
\begin{equation*}
U_{n_1,n_2}^{-1}\{U_{n_1,n_2}(t_p)\}-t_p\geq -C_4n_1^{-{5}/{9}}.
\end{equation*}
Finally we have that $\lambda(1-\lambda)\lfloor\lambda n\rfloor^{-{5}/{9}}\leq n^{-{5}/{9}}$, and so we arrive at
\begin{eqnarray*}
&& pr\Bigg[\sup_{\lambda\in[0,\frac{1}{2}]}\lambda(1-\lambda)\left|U^{-1}_{\lfloor \lambda n \rfloor,n-\lfloor\lambda n \rfloor}\{U_{\lfloor \lambda n \rfloor,n-\lfloor\lambda n \rfloor}(t_p)\}-t_p\right|>C_4n^{-5/9}\Bigg]\\
&&\leq pr\left[\sup_{n_1\leq n/2}n_1^{5/9}\left|U^{-1}_{n_1,n_1}\{U_{n_1,n_1}(t_p)\}-t_p\right|>C_4\right]\\
&&\leq pr\Bigg[\sup_{n_1\in\N}n_1^{5/9}\left|U_{n_1,n_1}(t_p-C_4n_1^{-5/9})-U(t_p-C_4n_1^{-5/9})-\{U_{n_1,n_1}(t_p)-p\}\right|>C_3\Bigg)\\
&&\leq pr\Bigg[\sup_{n_1\in\N}n_1^{{5}/{9}}\sup_{|t-t_p|\leq r_n}\left|U_{n_1,n_1}(t)-U(t)-\{U_{n_1,n_1}(t_p)-p\}\right|> C_3\Bigg]<\epsilon,\end{eqnarray*}
and we have shown the convergence in probability for $\lambda$ restricted to $[0,{1}/{2}]$. For $\lambda\in [1/2,1]$, note that
\begin{multline*}
\sup_{\lambda\in[\frac{1}{2},1]}\lambda(1-\lambda)\left|U^{-1}_{\lfloor \lambda n \rfloor,n-\lfloor\lambda n \rfloor}(p)-t_p+U_{\lfloor \lambda n \rfloor,n-\lfloor\lambda n \rfloor}(t_p)-p\right|\\
=\sup_{\lambda\in[0,\frac{1}{2}]}\lambda(1-\lambda)\left|\tilde{U}^{-1}_{\lfloor \lambda n \rfloor,n-\lfloor\lambda n \rfloor}(p)-t_p+\tilde{U}_{\lfloor \lambda n \rfloor,n-\lfloor\lambda n \rfloor}(t_p)-p\right|,
\end{multline*}
where $\tilde{U}_{n_1,n_2}$ is the two sample $U$-statistics with kernel $\tilde{h}(x,y,t)=h(y,x,t)$ calculated for the stochastic process $(\tilde{X}_i)_{i\in\Z}$ with $\tilde{X}_{i}=X_{n-i}$. Because of stationarity, the probability distribution of this does not change if we insert the random variables $\tilde{X}'_{i}=X_{-i}$ instead. The process $(X_{-i})_{i\in\Z}$ inherits the near epoch properties of $(X_i)_{i\in\Z}$.  Thus, with  the same arguments as above, we obtain
\begin{equation*}
\sup_{\lambda\in[0,\frac{1}{2}]}\lambda(1-\lambda)\left|\tilde{U}^{-1}_{\lfloor \lambda n \rfloor,n-\lfloor\lambda n \rfloor}(p)-t_p+\tilde{U}_{\lfloor \lambda n \rfloor,n-\lfloor\lambda n \rfloor}(t_p)-p\right|=O_P(n^{-5/9}).\qed
\end{equation*}
\vspace*{3mm}

{\em Proof of Theorem \ref{th:2uq-fclt}.}
We decompose the stochastic process into two parts:
\begin{multline*}
{n}^{1/2}\left[\lambda(1-\lambda)\{U^{-1}_{\lfloor \lambda n \rfloor,n-\lfloor \lambda  n \rfloor}(p)-t_p\}\right]_{\lambda\in[0,1]}\\
={n}^{1/2}\left[\lambda(1-\lambda)\{p-U_{\lfloor \lambda n \rfloor,n-\lfloor \lambda n \rfloor}(t_p)\}/{u(t_p)}\right]_{\lambda\in[0,1]}\\
+{n}^{1/2}\left[\lambda(1-\lambda)\left\{U^{-1}_{\lfloor \lambda n \rfloor,n-\lfloor  \lambda n \rfloor}(p)-t_p+\frac{U_{\lfloor \lambda n \rfloor,n-\lfloor \lambda n \rfloor}(t_p)-p}{u(t_p)}\right\}\right]_{\lambda\in[0,1]}.
\end{multline*}
By Theorem \ref{th:bahadur}, the second part converges to zero in supremum norm. As a consequence of Proposition A\ref{prop1}, the first part converges weakly to
\begin{equation*}
\left[(1-\lambda) W_1(\lambda)+\lambda\{W_2(1)-W_2(\lambda)\}\right]_{\lambda\in[0,1]},
\end{equation*}
where $W=(W_1,W_2)$ is a two-dimensional Brownian motion with covariance structure
\[
 \Cov\{W_i(\mu),W_j(\lambda)\}=(\mu\wedge \lambda){u^{-2}\{Q(p)\}}
 \sum_{k\in \Z} E[h_{i}\{X_0; Q(p)\}, h_{j}\{X_k;Q(p)\}].\qed
\]

\newpage

{\em Proof of Theorem \ref{th:HL-process} and Corollary \ref{cor:HL-test}.} By Theorem~\ref{th:2uq-fclt},
\begin{equation*}
{n}^{1/2}\lambda(1-\lambda)\{{\med}\left(X_i-X_j\big|1\leq i\leq \lfloor n\lambda\rfloor,\lfloor n\lambda\rfloor+1\leq j\leq n\right)\}_{\lambda\in(0,1)}
\end{equation*}
converges to
\begin{equation*}
\left[(1-\lambda) W_1(\lambda)+\lambda\{W_2(1)-W_2(\lambda)\}\right]_{\lambda\in[0,1]},
\end{equation*}
where $W=(W_1,-W_1)$ and $W_1$ is a Brownian motion, as $h_1(x,0)=-h_2(x,0)$. The variance is $\var \{W_1(1)\}={\sigma^2}/{u^2(0)}$. Now
\begin{equation*}
\frac{u(0)}{\sigma}\left[(1-\lambda) W_1(\lambda)+\lambda\{-W_1(1)+W_1(\lambda)\}\right]=\frac{u(0)}{\sigma}W_1(\lambda)-\lambda \frac{u(0)}{\sigma}W_1(1)
\end{equation*}
is a Brownian bridge. Finally, by Lemma A\ref{lem5}, Lemma A\ref{lemma:unifu0} and Theorem 1.2 of \cite{dehling2013}, ${\hat{u}_{k,n}}/{\hat{\sigma}_n}\rightarrow{u(0)}/{\sigma}$ in probability, which completes the proof also of Corollary \ref{cor:HL-test}. \qed

{\em Proof of Theorem \ref{theo:consistent}.}
For simplicity of notation, we write $k^\ast=k_n^\ast=[n\tau]$. Observe that
\begin{eqnarray*}
T_n &\geq &  \frac{\hat{u}_{k_n^\ast,n}(0)}{\hat{\sigma}_n} \frac{k_n^\ast}{n} \left(1-\frac{k_n^\ast}{n}\right){n}^{1/2}
\left| \med\{(X_j-X_i):1\leq i \leq k_n^\ast, k_n^\ast+1\leq j \leq n   \}\right|\\
&=:&\frac{\hat{u}_{k_n^\ast,n}(0)}{\hat{\sigma}_n}M_{n,k_n^\ast}
\end{eqnarray*}
Now, we can apply Theorem 2.4 of Dehling and Fried (2012), and we obtain
\[
  {n}^{1/2}\left[ \med\{(X_j-X_i): 1\leq i\leq k_n^\ast, k_n^\ast+1 \leq j\leq n  \} -\Delta \right] \claw \frac{1}{U^\prime(\Delta)} W_\Delta.
\]
Thus $M_{n,k_n^\ast}\geq \tau(1-\tau){n}^{1/2}\Delta+O_P(1)$. Note that the corrected estimate $\hat{u}_{k_n^\star,n}(0)$, which is used in $T_n$ at the true change-point position $k_n^\star$, is consistent for $u(0)$ not only under $\Hy_0$ but also under the alternative,
since the effect of $\Delta$ cancels out due to correcting with an equivariant estimator.
Thus showing $\hat{\sigma}_n{n}^{-1/2}\rightarrow 0$ as $n\rightarrow\infty$ is sufficient  for the consistency of the test based on $T_n$, since the above lower bound for $T_n$, and also $T_n$ itself, will converge to infinity in probability if $\Delta\neq 0$.
Recall that
\begin{equation*}
\hat{\sigma}_n = \frac{{\pi}^{1/2}}{ ({2l_n})^{1/2}(n-l_n+1)} \sum_{i=0}^{n-l_n} \bigg|\sum_{j=i+1}^{i+l_n} \big\{F_n(X_i)-1/2\big\}\bigg|.
\end{equation*}
Because $F_n$ is bounded by $0$ and $1$, we have $|\sum_{j=i+1}^{i+l_n} \{F_n(X_i)-1/2\}|\leq l_n/2$.
 Since we have assumed that $l_n=o(n^{1/2})$, it follows that $\hat{\sigma}_n\leq {({\pi l_n}/8)^{1/2}}=o(n^{1/4})$, which completes the proof.

In fact,  the test is even consistent if the corrected estimates $\hat{u}_{k,n}(0)$ in the definition of $T_n$ are replaced by the
simpler uncorrected estimate $\hat{u}(0)$. For this we just need to argue that additionally $\hat{u}(0)\geq {v}_n\rightarrow v$ in probability as $n\rightarrow\infty$, for some random variables $v_n$ and some constant $v>0$.  Observe that
\begin{multline*}
\hat{u}(0) = \frac{2}{n(n-1)b} \sum_{1\leq i<j\leq n} K\left(\frac{X_i-X_j}{b}\right)\\
\geq \frac{2}{n(n-1)b} \sum_{1\leq i<j\leq k_n^\star} K\left(\frac{X_i-X_j}{b}\right)+\frac{2}{n(n-1)b} \sum_{k_n^\star< i<j\leq n} K\left(\frac{X_i-X_j}{b}\right):=v_n.
\end{multline*}
Because the sample $X_i$, $i\leq k_n^\star$ is not affected by the change and the same is true for $X_i$, $k_n^\star<i\leq n$, we can apply Lemma A\ref{lem5}  to obtain
\begin{multline*}
v_n=\frac{k_n^\star(k_n^\star-1)}{n(n-1)}\frac{2}{k_n^\star(k_n^\star-1)b} \sum_{1\leq i<j\leq k_n^\star} K\left(\frac{X_i-X_j}{b}\right)\\
+\frac{(n-k_n^\star)(n-k_n^\star-1)}{n(n-1)}\frac{2}{(n-k_n^\star)(n-k_n^\star-1)b} \sum_{k_n^\star< i<j\leq n} K\left(\frac{X_i-X_j}{b}\right)\xrightarrow{n\rightarrow\infty}\{\tau^2+(1-\tau)^2\}u(0)
\end{multline*}
in probability, which is larger than 0 by assumption. \qed

\newpage

\section*{Appendix 3: further simulation results}

In this appendix we report some further simulation results.

In some additional simulation experiments we have investigated the estimation of the long run variance $\sigma^2$ under the alternative of a single level shift. For this we have generated time series of length $N=200$, $N=500$ or $N=1000$ from a first order autoregressive model with lag one correlation $\phi_1=$0$\cddot$4 and Gaussian innovations with unit variance, including a level shift of increasing height $j/4$, $j=0,1,\ldots,9$, in the center of the data set, that is, after the time point $N/2$. 600 data sets have been generated for each scenario.
Figure \ref{fig:asymvarest} depicts simulation results for estimation of the square root of the long run variance of the cumulative sums or the Hodges-Lehmann change-point statistic. The adaptive subsampling estimators tend to underestimate the long run standard deviation, which is depicted by a horizontal line, under the hypothesis. This negative bias gets smaller with increasing sample size, since the chosen block lengths get larger then.
Note that the true standard deviation of the change-point statistics in finite samples is somewhat smaller than this asymptotical limit, so the underestimation under the hypothesis is not severe if $N$ is large. Splitting the time series into non-overlapping subsequences of equal size stabilizes the estimation of the standard deviation under the alternative of a change-point but increases the underestimation under the hypothesis if $N$ is not very large, resulting in an increased probability of a type one error. Nevertheless, in case of long time series such splitting strategies will lead to a substantial further increase of the power against shifts.

\begin{figure}[htbp!]\centering
\resizebox{14.5cm}{5.0cm}{\includegraphics{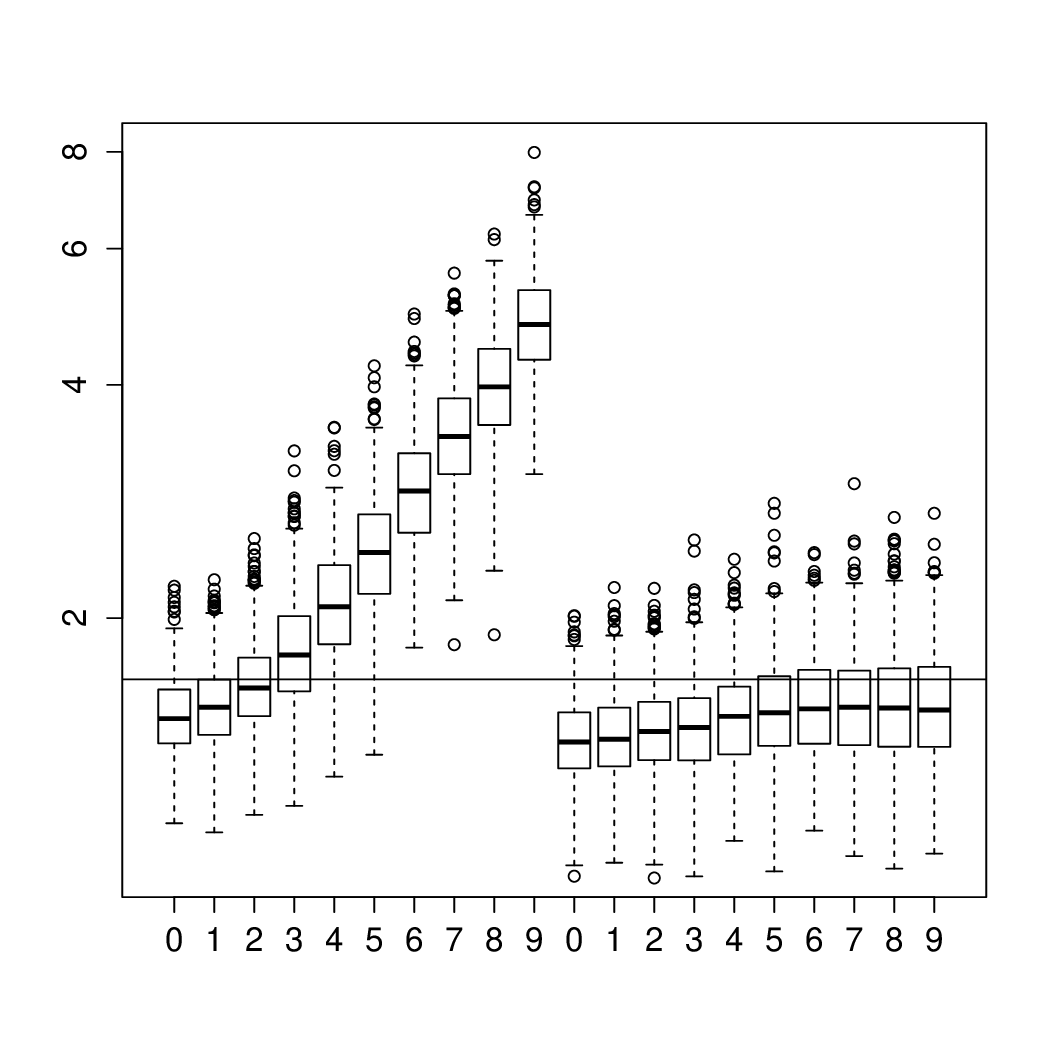}\includegraphics{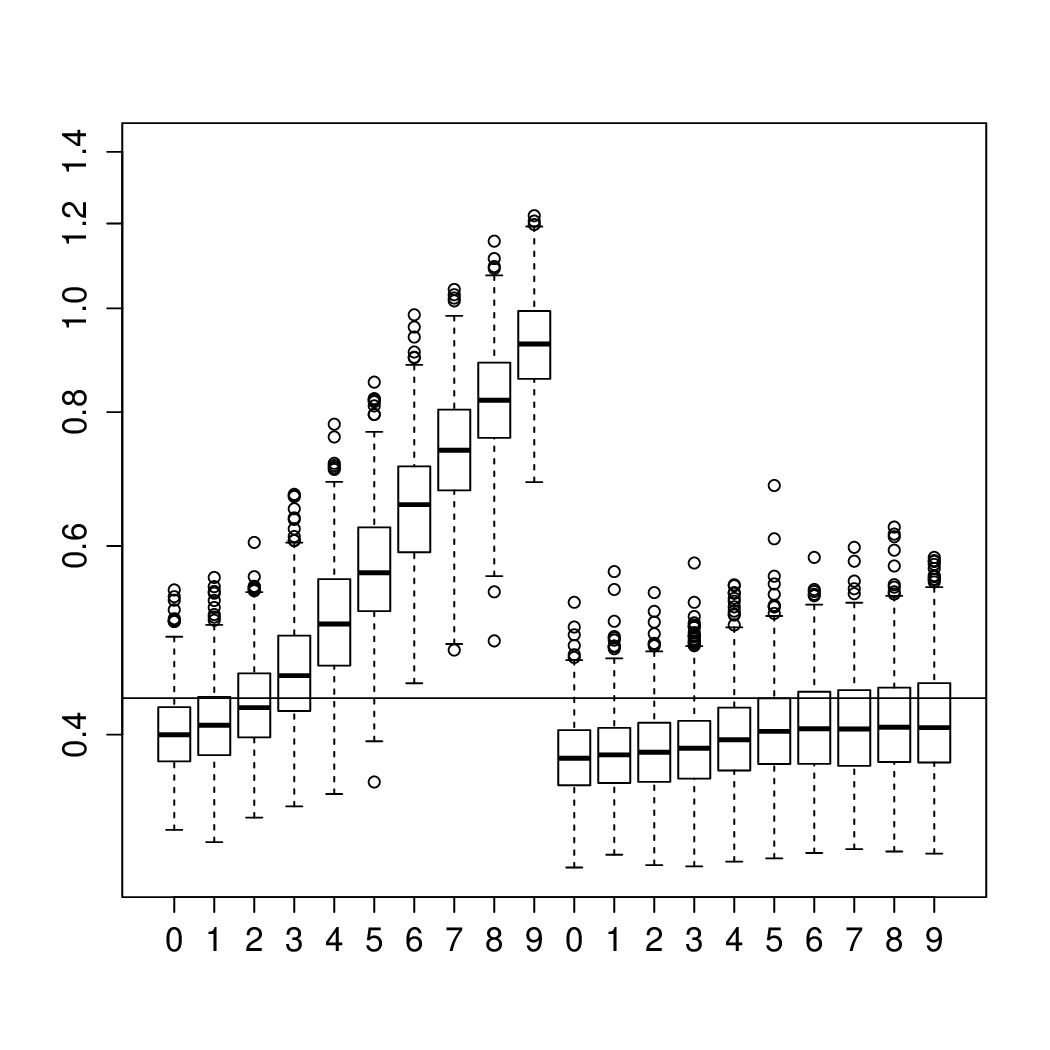}}\\
\resizebox{14.5cm}{5.0cm}{\includegraphics{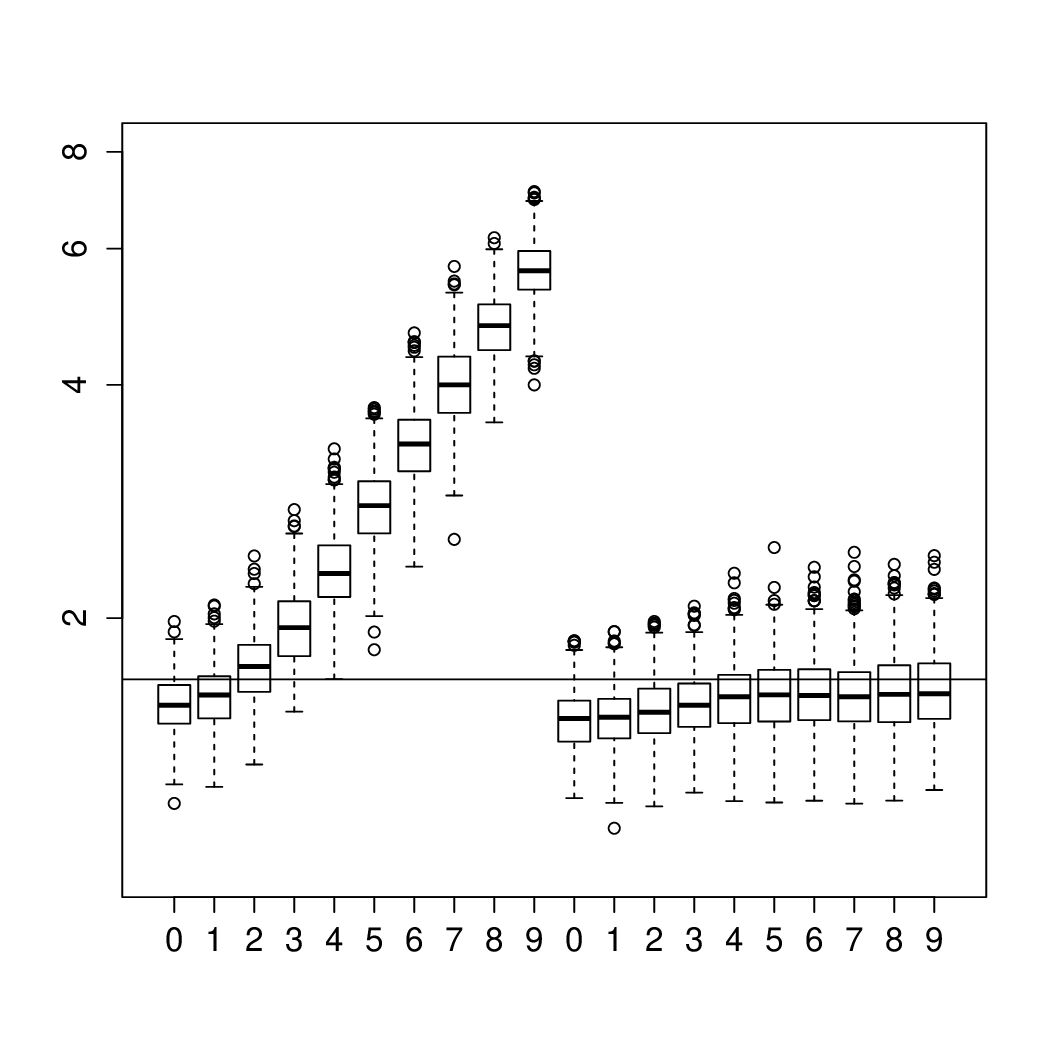}\includegraphics{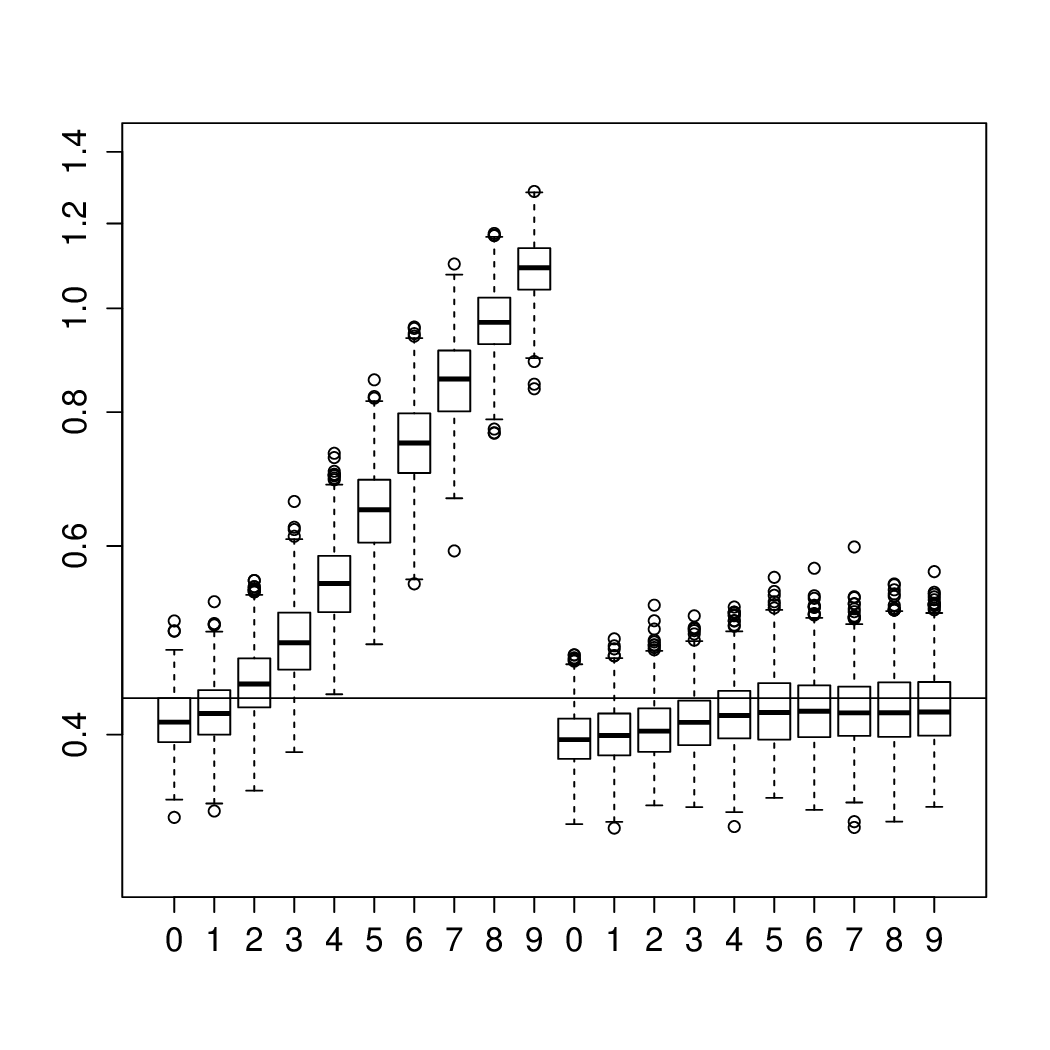}}\\
\resizebox{14.5cm}{5.0cm}{\includegraphics{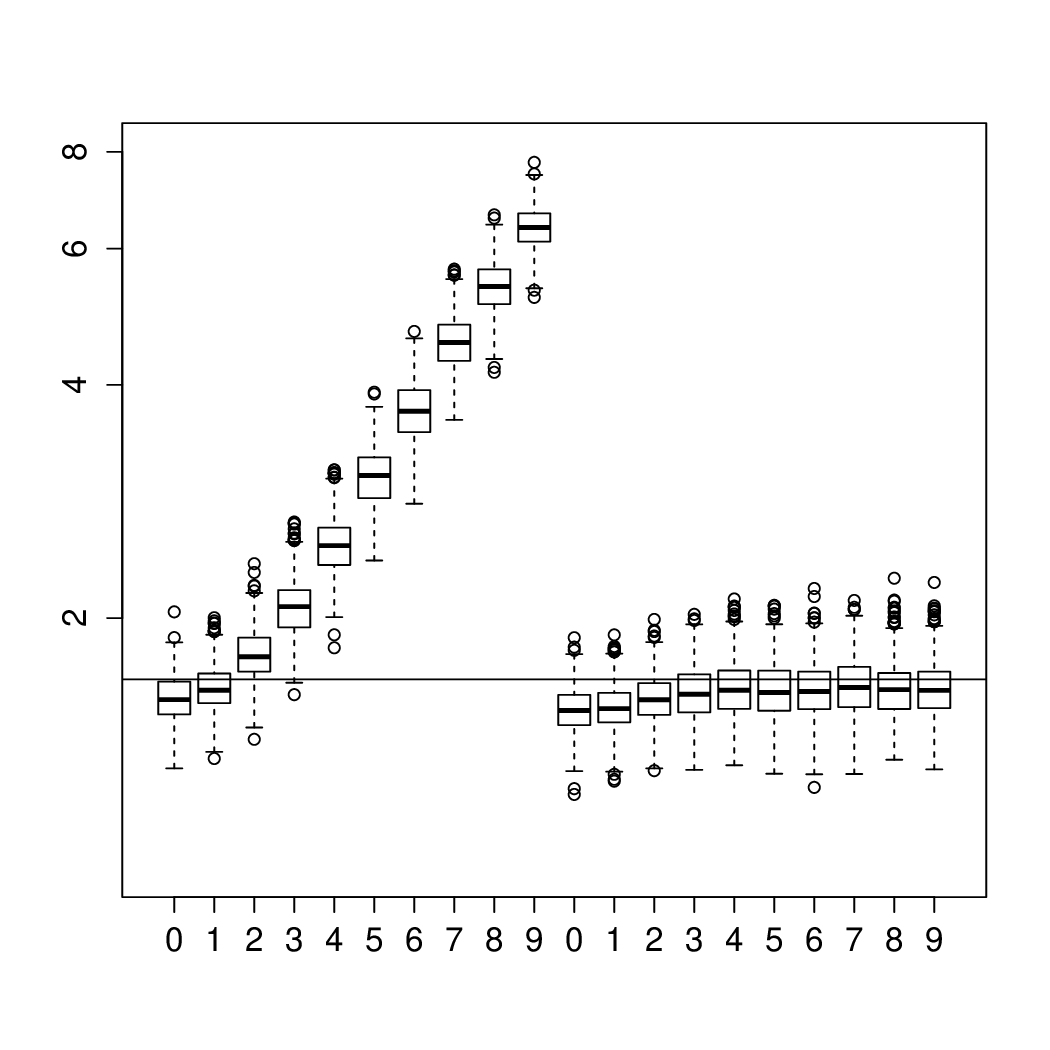}\includegraphics{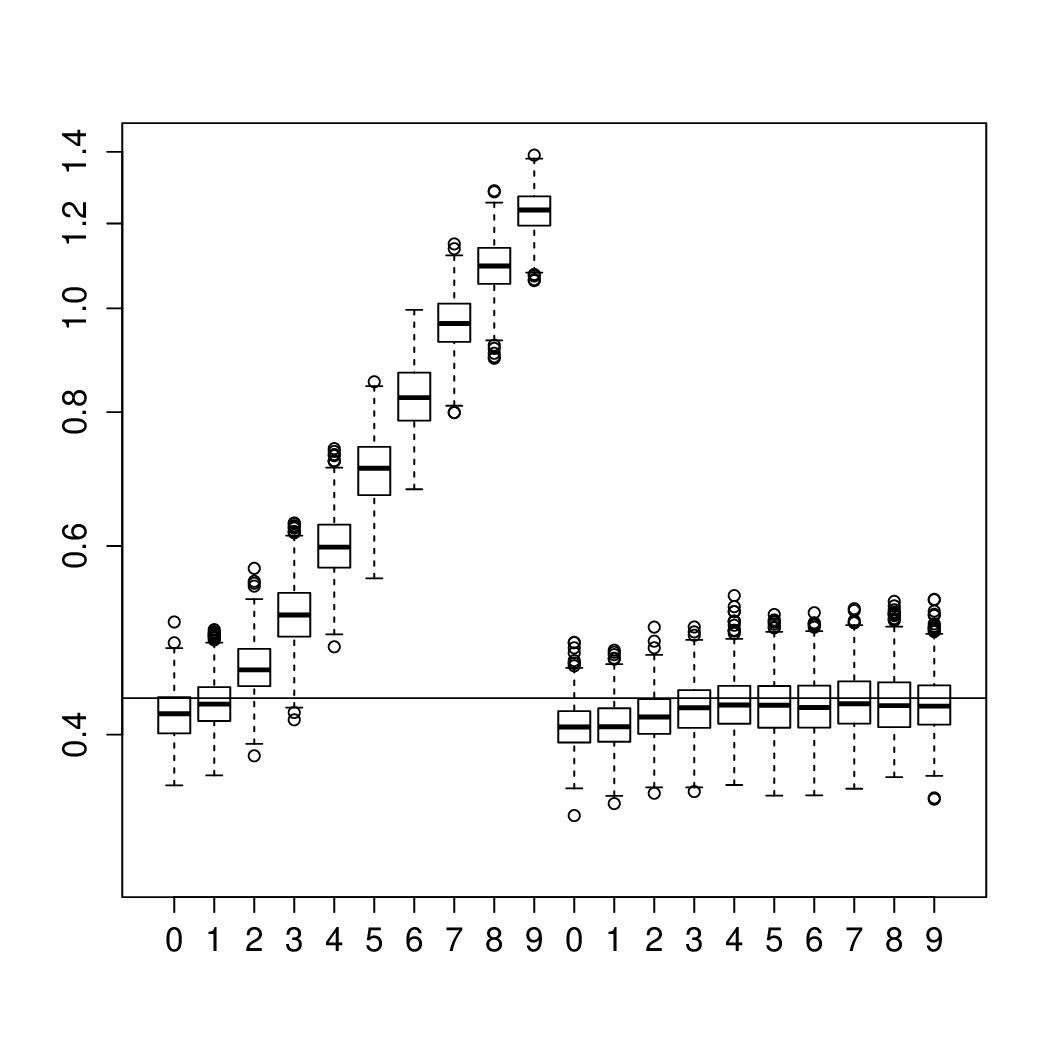}}\\
\caption{\label{fig:asymvarest}Boxplots depicting the simulation results for estimation of the long run standard deviation of the cumulative sum (panels on the left hand side) and the Hodges-Lehmann (right hand side) change-point test statistic in case of a shift of increasing height $j/4$, $j=0,1,\ldots,9$, in the center of a first order autoregressive time series with $\phi_1$=0$\cddot$4 and length
$N=200$ (top), $N=500$ (center) or $N=1000$ (bottom).
Results are on a logarithmic scale as it is natural for scaling factors. Adaptive subsampling as described in the paper applied to the full data set (left hand side of each panel) and median of the subsampling estimates obtained from three non-overlapping subsequences of length $\lfloor N/3\rfloor$ each (right).
 }
\end{figure}

\newpage

Table \ref{tab:anova} provides detailed results for an analysis of variance of the average powers of the different tests obtained under different
autoregressive moving average models and innovation distributions.

\begin{table}[h]{\footnotesize
\caption{\label{tab:anova}
Analysis of variance of the average size-adjusted empirical power of the tests, considering additive effects of the innovation distributions and the autocorrelation model.
Fitted powers are given in percent for autoregressive moving average models with different parameters $(\phi_1,\phi_2,\theta)$ and Gaussian innovations. Additive effects are reported for $t_3$- and $\chi_3^2$-distributed innovations. For instance, the fitted average power of the cumulative sums test C for a shift in the center of $n=100$ $t_3$-distributed white noise observations, with $(\phi_1,\phi_2,\theta)=(0,0,0)$,
is 55$\cddot$5\%+1$\cddot$6\%=57$\cddot$1\%, while for the test $T$ proposed here it is 66$\cdot$1\%.   The adjusted measure of determination is usually larger than 80\% and often larger than 90\%. The standard error of the fit is less than 4\%}{
\begin{tabular}{lrrrrrrrrrrrr}
Test &  C & W & O & T & H && C & W & O & T & H \\
&&&&&&{n=100}\\
$(\phi_1,\phi_2,\theta)$ &\multicolumn{5}{c}{change after $k^\star=50$}&&\multicolumn{5}{c}{change after $k^\star=75$}\\
$(0,0,0)$ & 55$\cddot$5& 55$\cddot$0& 57$\cddot$6& 58$\cddot$4& 54$\cddot$4&& 57$\cddot$2& 52$\cddot$4& 59$\cddot$7& 60$\cddot$8& 56$\cddot$0\\
$(0\cddot 4,0,0)$ & 49$\cddot$7& 50$\cddot$1& 57$\cddot$8& 58$\cddot$1& 49$\cddot$5&& 46$\cddot$5& 31$\cddot$5& 59$\cddot$8& 61$\cddot$5& 44$\cddot$5\\
$(0\cddot 8,0,0)$ & 14$\cddot$2&  8$\cddot$5& 39$\cddot$3& 37$\cddot$4& 10$\cddot$9 && 11$\cddot$8&  6$\cddot$3& 45$\cddot$4& 50$\cddot$9&  6$\cddot$3\\
$(0\cddot 4,0\cddot 3,0)$ & 21$\cddot$3& 12$\cddot$9& 52$\cddot$3& 51$\cddot$6& 16$\cddot$8&&  7$\cddot$4&  4$\cddot$6& 52$\cddot$0& 57$\cddot$7&  4$\cddot$4\\
$(0,0,0\cddot 5)$ & 56$\cddot$3& 57$\cddot$7& 61$\cddot$9& 62$\cddot$1& 56$\cddot$9&& 56$\cddot$6& 45$\cddot$4& 65$\cddot$1& 65$\cddot$5& 56$\cddot$6\\
$(0,0,0\cddot 8)$ & 55$\cddot$3& 55$\cddot$7& 60$\cddot$7& 61$\cddot$2& 55$\cddot$5&& 45$\cddot$7& 34$\cddot$5& 55$\cddot$8& 57$\cddot$5& 45$\cddot$1\\
$(0\cddot 3,0,0\cddot 5)$ & 45$\cddot$6 & 42$\cddot$8& 52$\cddot$2& 53$\cddot$9& 44$\cddot$9 && 34$\cddot$7& 23$\cddot$1& 50$\cddot$3& 53$\cddot$6& 33$\cddot$5\\
$t_3$ & 1$\cddot$6&  6$\cddot$1&  7$\cddot$1&  7$\cddot$7&  6$\cddot$7&&  2$\cddot$3 & 4$\cddot$5&  6$\cddot$7&  7$\cddot$8&  6$\cddot$9\\
$\chi_3^2$ & -1$\cddot$4&  4$\cddot$0& -6$\cddot$2&  4$\cddot$0&  3$\cddot$7&&  -1$\cddot$2&   2$\cddot$8& -10$\cddot$2&   3$\cddot$2&   3$\cddot$8\\
& &&&&&{n=200}\\
&\multicolumn{5}{c}{change after $k^\star=100$}&&\multicolumn{5}{c}{change after $k^\star=150$}\\
$(0,0,0)$&  52$\cddot$7& 53$\cddot$5& 54$\cddot$7& 54$\cddot$6& 52$\cddot$3&&  59$\cddot$1& 54$\cddot$9& 60$\cddot$8& 60$\cddot$8& 58$\cddot$3\\
$(0\cddot 4,0,0)$ &  51$\cddot$7& 53$\cddot$2& 56$\cddot$9& 57$\cddot$2& 52$\cddot$5&&  56$\cddot$1& 49$\cddot$2& 61$\cddot$7& 62$\cddot$2& 56$\cddot$5\\
$(0\cddot 8,0,0)$ &  30$\cddot$7& 24$\cddot$9& 45$\cddot$7& 45$\cddot$1& 28$\cddot$9 &&  21$\cddot$3& 11$\cddot$0& 46$\cddot$6& 49$\cddot$5& 17$\cddot$8\\
$(0\cddot 4,0\cddot 3,0)$ &  45$\cddot$0& 43$\cddot$2& 56$\cddot$8& 55$\cddot$9& 44$\cddot$2&&  33$\cddot$5 &18$\cddot$6& 58$\cddot$4& 59$\cddot$5& 30$\cddot$7\\
$(0,0,0\cddot 5)$ &  57$\cddot$0& 58$\cddot$8& 61$\cddot$1& 61$\cddot$3& 58$\cddot$1&&  50$\cddot$8& 45$\cddot$7& 55$\cddot$9& 56$\cddot$8& 52$\cddot$1\\
$(0,0,0\cddot 8)$ &  56$\cddot$6& 58$\cddot$0& 60$\cddot$5& 60$\cddot$8& 57$\cddot$2&&   52$\cddot$0& 45$\cddot$8& 56$\cddot$9& 57$\cddot$8& 52$\cddot$6\\
$(0\cddot 3,0, 0\cddot 5)$ &  50$\cddot$3& 51$\cddot$0& 55$\cddot$5& 55$\cddot$4& 50$\cddot$2&&  54$\cddot$6& 44$\cddot$0& 60$\cddot$4 &60$\cddot$5& 53$\cddot$9\\
$t_3$ &   2$\cddot$7& 8$\cddot$1& 8$\cddot$6& 8$\cddot$7& 7$\cddot$8&&    2$\cddot$6&  8$\cddot$0&  8$\cddot$0&  8$\cddot$6&  7$\cddot$1\\
$\chi_3^2$ &  -0$\cddot$2&  5$\cddot$2& -1$\cddot$8&  5$\cddot$2 & 3$\cddot$6&&  -0$\cddot$8&  4$\cddot$5& -4$\cddot$8&  4$\cddot$0&  3$\cddot$3\\[2mm]
 \end{tabular}}\label{tab:power}}
\end{table}


\end{document}